\author{Pierre Mathonet}
\address[Pierre Mathonet]{Mathematics Research Unit, FSTC, University of Luxembourg \\
6, rue Coudenhove-Kalergi, L-1359 Luxembourg, Luxembourg}
\email{pierre.mathonet[at]uni.lu,P.Mathonet[at]ulg.ac.be }
\author{Fabian Radoux} 
\address[Fabian Radoux]{University of Li\`ege, Institute of mathematics,\\
 Grande Traverse, 12 - B37, B-4000 Li\`ege, Belgium}
\email{Fabian.Radoux[at]ulg.ac.be}
\date{\today} 
\title[Projectively equivariant quantizations]{Projectively equivariant
quantizations over the superspace $\R^{p|q}$}
\theoremstyle{plain}
\newtheorem{theorem}{Theorem}[subsection]
\newtheorem{lemma}[theorem]{Lemma}
\newtheorem{proposition}[theorem]{Proposition}
\theoremstyle{definition}
\newtheorem{definition}[theorem]{Definition}
\theoremstyle{remark}
\newtheorem{remark}[theorem]{Remark}
\newcommand{\sla}{\mathfrak{sl}}
\newcommand{\gl}{\mathfrak{gl}}
\newcommand{\str}{\mathrm{str}}
\newcommand{\Id}{\mathrm{Id}}
\newcommand{\R}{\mathbb{R}}
\newcommand{\Z}{\mathbb{Z}}
\newcommand{\N}{\mathbb{N}}
\renewcommand{\L}{\mathcal{L}}
\renewcommand{\S}{\mathcal{S}}
\newcommand{\F}{\mathcal{F}}
\newcommand{\g}{\mathfrak{g}}
\newcommand{\h}{\mathfrak{h}}
\newcommand{\dive}{\mathrm{div}}
\newcommand{\euler}{\mathcal{E}}
\newcommand{\Vect}{\mathrm{Vect}}
\newcommand{\cc}{{\mathcal{C}}}
\begin{document}
\begin{abstract}
We investigate the concept of projectively equivariant quantization in the
framework of super projective geometry. When the projective superalgebra $\mathfrak{pgl}(p+1|q)$ is simple, our result is similar to the classical one in the purely even case: we prove the existence and uniqueness of the quantization except in some critical situations. When the projective superalgebra is not simple (i.e. in the case of $\mathfrak{pgl}(n|n)\not\cong \mathfrak{sl}(n|n)$), we show the existence of a one-parameter family of equivariant quantizations. We also provide explicit formulas in terms of a generalized divergence operator acting on supersymmetric tensor fields. 
\end{abstract}
\maketitle
MSC(2010): 17B66, 58A50
 
{\bf Keywords:} Projective superspace, differential operators, quantization.
\section{Introduction}

Denote by $\S_\delta$ the space of contravariant symmetric tensor fields with
coefficients in $\delta$-densities over $\R^n$ and by ${\mathcal
D}_{\lambda,\lambda+\delta}$ the space of differential operators acting
between spaces of densities of weight $\lambda$ and $\lambda+\delta$.
A \emph{projectively equivariant quantization} procedure over $\R^n$, as introduced
by P. Lecomte and V. Ovsienko in \cite{LO}, is a linear bijection from
$\S_\delta$ to ${\mathcal D}_{\lambda,\lambda+\delta}$ that intertwines
the actions of the Lie algebra $\mathfrak{pgl}(n+1)$ of infinitesimal projective
transformations on both spaces. This bijection is required to satisfy a
natural normalization condition (see Formula (\ref{norma})).

In their seminal work \cite{LO}, P. Lecomte and V. Ovsienko showed the existence and
uniqueness of the projectively equivariant quantization in the case $\delta=0$.
This result was generalized in \cite{DO} for arbitrary $\delta\in \R\setminus C$,
where $C$ is a set of \emph{critical values}.

Various generalizations of these results were considered so far. 
Let us mention \cite{DLO,Lecras,BHMP,BM} for the analysis of equivariant quantizations over Euclidean spaces, \cite{Bou1,Bou2,DO1,Loubon} for first steps towards the definition of such quantizations over arbitrary manifolds, \cite{Leconj} for the formal definition of \emph{natural and projectively invariant} quantization procedures over arbitrary manifolds and finally \cite{Bor,Hansoul,MR,Fox,sarah,MR1,MR2,MR3,Radoux06,Radoux08,Radoux09,CapSil} for existence theorems (for non-critical situations) and several independent constructions for such quantization procedures in more and more general contexts.

Recently, several papers dealt with the problem of equivariant quantizations in the
context of supergeometry: first, in \cite{GarMelOvs07,Mel09} the problem of equivariant quantizations over the supercircles
$S^{1|1}$ and $S^{1|2}$ was considered (with respect to orthosymplectic superalgebras). Second, the thesis \cite{Mic09}
dealt with \emph{conformally equivariant quantizations} over supercotangent bundles. 
Finally, a result extending to supergeometry the theory of projectively invariant
quantization appeared in \cite{Geo09} where it was shown that, given
a supermanifold endowed with a projective class of superconnections, it is possible
to associate a differential operator of order two to any supersymmetric tensor field
in a projectively invariant way. 

The similarity between the formula appearing in
that paper (see \cite[Theorem 3.2]{Geo09}) and the explicit formulas in the context of
$\mathfrak{pgl}(n+1)$-equivariant quantization over $\R^n$ led us to consider here
the problem of equivariant quantizations with respect to the Lie superalgebra
$\mathfrak{pgl}(p+1|q)$ over the superspace $\R^{p|q}$.

Over fields of characteristic 0, the projective Lie algebra $\mathfrak{pgl}(n)$ is isomorphic to the special linear algebra $\sla(n)$. This is not always the case if the characteristic of the ground field is positive or for Lie superalgebras. It turns out that our results vary in accordance with the isomorphism of these Lie superalgebras: 
\begin{itemize}
 \item When the projective superalgebra $\mathfrak{pgl}(p+1|q)$ is isomorphic to the special linear Lie superalgebra, our result is similar to the classical one \cite{LO,DO}: we show the existence and uniqueness of the quantization, except for a countable set of critical values of the parameter $\delta$ (see Proposition \ref{gencrit} and Theorems \ref{flatex} and \ref{Expl});
 \item When the projective superalgebra is $\mathfrak{pgl}(n|n)$ for some $n$, it is not isomorphic to $\sla(n|n)$ and our results do not depend on the parameters $\lambda$ and $\delta$. We prove the existence of a one-parameter family of $\mathfrak{psl}(n|n)$-equivariant quantizations, that turn out to be $\mathfrak{pgl}(n|n)$-equivariant (see Theorems \ref{expl2} and \ref{expl3}).
\end{itemize}
In both situations we obtain explicit formulas for the quantization in terms of a generalized divergence
operator acting on supersymmetric tensor fields. These formulas coincide with the one given by J. George \cite{Geo09} for tensor fields of degree two over
$\R^{p|q}$ endowed with the flat superconnection.

\section{Notation and problem setting}\label{tens}
In this section, we will recall the definitions of the spaces of
differential operators acting on densities and of their corresponding
spaces of symbols over the superspace $\R^{p|q}$. Several objects that we use appear
here and there in the literature, sometimes with different sign conventions. We
present them in an explicit way to fix the notation and for the convenience of the
reader. Then we set the problem of existence of projectively equivariant
quantizations and symbol maps. Throughout the paper, we only consider Lie superalgebras over $\R$. We denote by $\tilde{a}$ the
parity of an homogeneous object $a$. For indices $i\in\{1,\ldots,p+q\}$, we set
$\tilde{i}=0$ if $i\leqslant p$ and 1 otherwise. We denote by $x^1,\ldots,x^p$ the set of
even indeterminates, and by $\theta^1,\ldots,\theta^q$ the set of odd (anticommuting)
indeterminates. We also use the unified notation $y^i$, where $i\leqslant p+q$, for the set of even and odd
indeterminates, $y^1,\ldots,y^p$ being the even $x^1,\ldots,x^p$.  
\subsection{Densities and weighted symmetric tensors}
In the purely even situation, $\lambda$-densities ($\lambda\in\R$) over a manifold $M$ are defined as smooth sections of the rank 1 bundle $|\mathrm{Vol}(M)|^\lambda\to M$. This endows the space of $\lambda$-densities $\F_\lambda(M)$ with a natural action of diffeomorphisms and of vector fields. The $\mathrm{Vect}(M)$-modules $\F_\lambda(M)$ can also be seen as deformations of the space of smooth functions. 
We adopt this second point of view for the definition of densities over $\R^{p|q}$ and use the classical divergence over $\R^{p|q}$ in order to define such a deformation of the $\Vect(\R^{p|q})$-module of smooth functions $\F=C^\infty(\R^{p|q})$. 
\begin{definition}\label{Defdiv}
 The divergence of a vector field $X\in \Vect(\R^{p|q})$ is given by
\[\dive(X)=\sum_{i=1}^{p+q}(-1)^{\tilde{y_i}\widetilde{X^i}}\partial_{y^i}X^i,\]
whenever $X=\sum_{i=1}^{p+q}X^i\partial_{y^i}$.
\end{definition}

\begin{definition}
The $\Vect(\R^{p|q})$-module $\mathcal{F}_{\lambda}$ of densities of degree $\lambda$ is the space of smooth functions
$\F$ endowed with the Lie derivative given by
\[L_X^\lambda f=X(f)+\lambda\,\dive(X)f\]
for all $X\in \Vect(\R^{p|q})$ and $f\in \F$. 
\end{definition}
It turns out that the Lie derivative of densities is a particular case of Lie
derivative on generalized tensors considered for instance in \cite{BerLei81Ser,GrozmanLeites}. If $(V,\rho)$ is a representation of the algebra $\gl(p|q)$, the space of tensor fields of type $(V,\rho)$ is defined as
$T(V)=\F\otimes V$ and the Lie derivative of a tensor field $f\otimes v$
along any vector field $X$ is defined by the explicit formula
\begin{equation}\label{eqRadouxLeites}L_{X}(f\otimes v)=X(f)\otimes
v+(-1)^{\widetilde{X}\tilde{f}}\sum_{ij}f
J_{i}^{j}\otimes\rho(e_{j}^{i})v,\end{equation}
where $J_{i}^{j}=(-1)^{\tilde{y^i}\widetilde{X}+1}(\partial_{y^{i}}X^{j})$ and
$e_{j}^{i}$ is the operator defined in the standard basis $e_1,\ldots, e_{p+q}$ of $\R^{p|q}$ (see Definition \ref{canbases}) by $e_{j}^{i}(e_k)=\delta^i_ke_j$. This formula defines a representation of $\Vect(\R^{p|q})$ on $T(V)$. For example, identifying the vector $e_k\in \R^{p|q}$ with the constant vector field $\partial_{y^k}$ and taking for $\rho$ the identity representation, we get the Lie derivative (bracket) of vector fields.
\begin{remark} Formula (\ref{eqRadouxLeites}) is not word for word the one given in \cite{BerLei81Ser}. Let us explain the correspondence. In that paper, the formula is given by
\begin{equation}\label{eqLeites}L_{X}(f v)=X(f)
v+(-1)^{\widetilde{X}\tilde{f}}\sum_{ij}f
D^{ij}\otimes\rho(E_{ij})v,\end{equation}
where \[D^{ij} = (-1)^{\tilde{y^i}(\widetilde{X^j}+1)}\partial_{y^i}X^j=-(-1)^{\tilde{y^i}(\tilde{y^j}+1)}J^j_i,\] and $E_{ij}$ is the matrix whose canonical action on $e_{k}$ corresponds to the adjoint action of the vector field $y^{i}\partial_{y^j}$ on $\partial_{y^k}$ (identifying $e_k$ and $\partial_{y^k}$ as above). We then have ${E_{ij}=-(-1)^{\tilde{y^i}(\tilde{y^j}+1)} e^i_j}$, so that  Formulas (\ref{eqRadouxLeites}) and (\ref{eqLeites}) define the same Lie derivative.
\end{remark}
The particular case of densities corresponds to a
vector space $B^\lambda$ of dimension $1|0$ spanned by one element $u$ and the
representation of $\gl(p|q)$ defined by $\rho(A)u=-\lambda\,\str(A)u.$ Note that
in the formulas below, we will not explicitly write down the generator $u$ of
$B^\lambda$ unless this leads to confusion.

As we continue, we will be interested in the space of symmetric tensor fields over
$\R^{p|q}$. 
Recall that the symmetric algebra over $\R^{p|q}$ is defined as the quotient of the
tensor algebra of $\R^{p|q}$ by the graded ideal generated by elements of the form
$v\otimes w-(-1)^{\tilde{v}\tilde{w}}w\otimes v$, where $v,w\in\R^{p|q}$. There is a bijection from the
symmetric algebra to the subspace of the tensor algebra made of symmetric tensors.
It is given, for homogeneous elements $v_1,\ldots,v_k\in \R^{p|q}$, by
\[
[v_1\otimes\cdots\otimes v_k]\mapsto v_1\vee\cdots\vee v_k=\sum_{\sigma\in
S_k}\mathrm{sgn}(\sigma,v_1,\ldots,v_k) v_{\sigma^{-1}(1)}\otimes\cdots\otimes v_{\sigma^{-1}(k)},
\]
where $\mathrm{sgn}(\sigma,v_1,\ldots,v_k)$ is the signature of the permutation $\sigma'$ induced by $\sigma$ on the ordered subset of all odd elements among $v_{1},\ldots,v_{k}$. We denote by $S^k$ the subspace of elements of degree $k$ in the symmetric algebra of the superspace $\R^{p|q}$ or the corresponding space of supersymmetric tensors.

There is also a natural representation $\rho$ of $\gl(p|q)$ on $S^k$
defined by 
\begin{equation}\label{rhotens}
\rho(A)(v_{1}\vee\cdots\vee
v_{l})=\sum_{j=1}^{l}(-1)^{\tilde{A}(\sum_{k=1}^{j-1}\tilde{v}_{k})}v_{1}\vee\cdots\vee
A v_{j}\vee\cdots\vee v_{l}.\end{equation}
The space of \emph{weighted symmetric tensors} $S^k_\delta$ over $\R^{p|q}$ is then
the tensor product $B^\delta\otimes S^k$, where the weight $\delta$ is a real number. It is endowed with the tensor
product representation of $\gl(p|q)$. 
\begin{definition}\label{defsymb}
 The space of weighted symmetric tensor fields $\S^k_{\delta}$ is the space of
tensor fields $T(S^k_\delta)=\F\otimes
S^k_\delta$ endowed with the action $L$ of $\Vect(\R^{p|q})$
defined by Formula (\ref{eqRadouxLeites}).
\end{definition}

\subsection{Differential operators and symbols}
We denote by $\mathcal{D}_{\lambda,\mu}$ the space of 
linear differential operators from $\mathcal{F}_\lambda$ to $\mathcal{F}_\mu$. The
space $\mathcal{D}_{\lambda,\mu}$ is filtered by the order of
differential operators. We denote by $\mathcal{D}^k_{\lambda,\mu}$ the
space of differential operators of order at most $k$. 
It is easy to see that every $D\in \mathcal{D}^k_{\lambda,\mu}$ has a unique
expression of the form
\begin{equation}\label{eqDiffop}
 D(f)=\sum_{|\alpha|\leqslant k}f_\alpha(x,\theta)\genfrac{(}{)}{}{}{\partial}{\partial
x^1}^{\alpha_1}\cdots\genfrac{(}{)}{}{}{\partial}{\partial
x^p}^{\alpha_p}\genfrac{(}{)}{}{}{\partial}{\partial
\theta^1}^{\alpha_{p+1}}\cdots\genfrac{(}{)}{}{}{\partial}{\partial
\theta^p}^{\alpha_{p+q}}f,
\end{equation}
where $\alpha$ is a multiindex, $|\alpha|=\sum_{i=1}^{p+q}\alpha_i$,
$\alpha_{p+1},\ldots,\alpha_{p+q}$ are in $\{0,1\}$ and $f_\alpha(x,\theta)$ is in
$\F$.

The action of $\Vect(\R^{p|q})$ is induced by its actions on $\mathcal{F}_\lambda$
and $\mathcal{F}_\mu$: it is given by the supercommutator. For every
$D\in\mathcal{D}_{\lambda,\mu}$ and $X\in\Vect(\R^{p|q})$, we have
\[\mathcal{L}_X D=L_X^\mu\circ D -(-1)^{\tilde{X}\tilde{D}}D\circ L_X^\lambda
.\]
This action preserves the filtration of differential operators. The space of
\emph{symbols} is the graded space associated with $\mathcal{D}_{\lambda,\mu}$,
endowed with the Lie derivative induced by the Lie derivative of differential
operators. As a module over $\Vect(\R^{p|q})$, the space of symbols is isomorphic to the space of weighted symmetric tensor fields 
\[\mathcal{S}_{\delta}
=\bigoplus_{l=0}^{\infty}\mathcal{S}^l_{\delta},\quad \delta=\mu-\lambda,\]
endowed with the action of $\Vect(\R^{p|q})$ given in Definition \ref{defsymb}.

The isomorphism is defined by the \emph{principal symbol operator}
\[\sigma_k\colon
\mathcal{D}^k_{\lambda,\mu}\to \mathcal{S}^k_{\delta}\colon D\mapsto \sum_{|\alpha|=
k}f_\alpha(x,\theta)\otimes e_1^{\alpha_1}\vee\cdots \vee e_p^{\alpha_p}\vee
e_{p+1}^{\alpha_{p+1}}\vee\cdots\vee e_{p+q}^{\alpha_{p+q}},\]
where $D$ is given by (\ref{eqDiffop}). This operator commutes with
the action of vector fields and is a bijection from the quotient space
$\mathcal{D}^l_{\lambda,\mu}/\mathcal{D}^{l-1}_{\lambda,\mu}$ to
$\mathcal{S}^l_{\delta}$.
\subsection{Quantizations}
A quantization over $\R^{p|q}$ is a linear bijection $Q$
  from the space of symbols $\mathcal{S}_{\delta}$ to the space 
of differential operators $\mathcal{D}_{\lambda,\mu}$ such
  that
\begin{equation}\label{norma}\sigma_k(Q(T))=T\;\mbox{for all }
T\in\mathcal{S}^{k}_{\delta}\;\mbox{and all }
  k\in\mathbb{N}.\end{equation}
The inverse of such a quantization is a symbol map.
\subsection{The projective superalgebra of vector fields}\label{sl}
Let us recall how to realize the algebra $\mathfrak{pgl}(p+1|q)=\mathfrak{gl}(p+1|q)/\R\Id$ as a subalgebra of
vector fields. This construction is inspired by the definition of the
superprojective space. The functions over the superprojective space of dimension
$p|q$ are superfunctions $g$ of $p+1$ even indeterminates
$(x^0,\ldots,x^p)\in\R^{p+1}\setminus\{0\}$ and $q$ odd indeterminates
$(\theta^1,\ldots,\theta^q)$ that are homogeneous of degree zero in the following
sense: in the expansion of $g$ as
\begin{equation}\label{eq1}
g(x^0,\ldots,x^p,\theta^1,\ldots,\theta^q)=\sum_{I\subseteq\lbrace 1,\ldots,q\rbrace}g_I(x^0,\ldots,x^p)\theta^I,
\end{equation}
($\theta^I=\theta^{i_1}\cdots\theta^{i_r}$ if $i_1<\cdots<i_r$ are the elements of
$I$) each function $g_I$ fulfills the condition
\begin{equation}\label{homog}g_I(r x^0,\ldots,r
x^p)=|r|^{-|I|}g_I(x^0,\ldots,x^p)\quad\mbox{for all } r\in\R\setminus\{0\},\end{equation}
where $|I|$ denotes the cardinality of $I$. This means that the function $g_I$
should be homogeneous of degree $-|I|$. 

Also recall that it is possible to realize the Lie superalgebra $\gl(m|n)$ as a
subalgebra of vector fields of $\R^{m|n}$ by the homomorphism of superalgebras
\[h_{m,n}\colon\mathfrak{gl}(m|n)\to \Vect(\R^{m|n})\colon A\mapsto
-\sum_{i,j}(-1)^{\tilde{j}(\tilde{i}+\tilde{j})}A_j^iy^j\partial_{y^i}.\]
We consider the open subset of $\R^{p+1}$ defined by
$\Omega=\{(x^0,\ldots,x^p)\colon x_0>0\}$ and we denote by $H(\Omega)$
the space of restrictions of functions satisfying (\ref{homog}) to $\Omega$.
There is a correspondence $i\colon\F\to H(\Omega)$
given by 
\[
i(f)(x^0,\ldots,x^p)=\sum_{I\subseteq
\lbrace 1,\ldots,q\rbrace}(x^0)^{-|I|}f_I\left(\frac{x^1}{x^0},\ldots,\frac{x^p}{x^0}\right)\theta^I,\]
if $f$ is given by $\sum_{I\subseteq \lbrace 1,\ldots,q\rbrace} f_I\theta^I$.
Now, the space $H(\Omega)$ is preserved by the action of linear
vector fields. Therefore, we can associate with every linear vector field $X$ over $\Omega$ a vector field $\pi(X)\in \Vect(\R^{p|q})$ by setting
\[\pi(X)(f)=i^{-1}\circ X\circ i(f)\quad\mbox{for all $f$ in }\F.\] 
For a matrix $B=(B^i_j)_{i,j=0,\ldots,p+q}$, the image $\pi\circ h_{p+1,q}(B)$ is given by
\[-\left(\sum_{i,j=1}^{p+q}(-1)^{\tilde{j}(\tilde{i}+\tilde{j})}B_j^i
y^j\partial_{y^i}+\sum_{j=1}^{p+q}B_0^j \partial_{y^j}-\sum_{j=1}^{p+q}B_j^0
y^j(-1)^{\tilde{j}}\euler-B_0^0\euler\right),\]
where $\euler$ is the Euler vector field defined by 
\begin{equation}\label{eq:euler}\euler=\sum_{i=1}^{p+q}y^i\partial_{y^i}.\end{equation}
Moreover, it is easy to check that $\pi\circ h_{p+1,q} (\Id)=0$, so that the
homomorphism $\pi\circ h_{p+1,q}$ induces a homomorphism from $\mathfrak{pgl}(p+1|q)$ to $\Vect(\R^{p|q})$.

For a matrix $B\in \mathfrak{gl}(p+1|q)$, we denote by $[B]$ its equivalence class in $\mathfrak{pgl}(p+1|q)$. The algebra $\mathfrak{pgl}(p+1|q)$ carries a $\Z$-grading defined by
\begin{equation}\label{eq:j}j\colon\mathfrak{pgl}(p+1|q)\to \g_{-1}\oplus \g_0\oplus \g_1\colon\left[\left(\begin{array}{ll}a
&\xi\\h&A\end{array}\right)\right]\mapsto (h,A-a \Id,\xi),
\end{equation}
where $\g_{-1}=\R^{p|q}$, $\g_{0}=\gl(p|q)$ and $\g_{1}=(\R^{p|q})^*$ (the space of
row vectors with entries in $\R$).
Using this notation, the vector fields associated with elements of
$j(\mathfrak{pgl}(p+1|q))$ are given by $h\mapsto X^h$, where
\begin{equation}\label{real}X^h=\left\{\begin{array}{lll}
-\sum_{i=1}^{p+q}v^i\partial_{y^i}&\mbox{if}&h=v\in\R^{p|q}\\
-\sum_{i,j=1}^{p+q}(-1)^{\tilde{j}(\tilde{i}+\tilde{j})}A_j^i
y^j\partial_{y^i}&\mbox{if}&h=A\in \gl(p|q)\\
\sum_{j=1}^{p+q}\xi_j y^j(-1)^{\tilde{j}}\euler&\mbox{if}&h=\xi\in(\R^{p|q})^*.
\end{array}
\right.\end{equation}
It will be useful for our computations to note that, for every $A\in\gl(p|q)$,
Formula (\ref{eqRadouxLeites}) reduces to
\begin{equation}\label{eqXA}L_{X^A}(f\otimes v)=X^A(f)\otimes
v+(-1)^{\tilde{A}\tilde{f}}f\otimes \rho(A)v.\end{equation}

It is also noteworthy that the Euler vector field $\euler$ is exactly $X^{-\Id}$,
where $\Id$ is the identity matrix in $\gl(p|q)$.
By the isomorphism $j^{-1}$, the matrix $-\Id$ corresponds to the equivalence class of matrices
\[\left[\left(\begin{array}{cc}
0&0\\0&-\Id
\end{array}\right)\right]=\left[\left(\begin{array}{cc}
1&0\\0&0
\end{array}\right)\right].\]
We also call this class of matrices the Euler element of $\mathfrak{pgl}(p+1|q)$, and we also denote it by $\euler$. It is completely
determined by its grading property:
\[ad(\euler)|_{\g_k}=k\Id_{\g_k}\quad\mbox{for all }k\in\{-1,0,1\}.\]
When $p$ and $q$ are distinct, we have
$\g_0=\gl(p|q)=\h_0\oplus\R\euler$, where $\h_0=\sla(p|q)$ is the subalgebra of $\g_0$ made of supertraceless
matrices.

Finally, let us recall that $\mathfrak{pgl}(p+1|q)$ is isomorphic to $\sla(p+1|q)$ when $q\not=p+1$, through the map 
\[\iota\colon\mathfrak{pgl}(p+1|q)\to\sla(p+1|q)\colon[A]\mapsto A-\frac{1}{p+1-q}\Id.\]
In what follows, we will always denote by $\sla(p+1|q)$ the algebra $\mathfrak{pgl}(p+1|q)$ when we assume that $q\not=p+1$.
\subsection{Projectively equivariant quantizations}
A projectively equivariant quantization over $\R^{p|q}$ (in the sense of \cite{DLO,LO}) is a
quantization
 \[Q \colon\mathcal{S}_{\delta}\to
\mathcal{D}_{\lambda,\mu}\]
 such that, for every $h\in \mathfrak{pgl}(p+1|q)$, one has
\[\mathcal{L}_{X^h}\circ Q=Q\circ L_{X^h}.\]
The existence and uniqueness of such quantizations in the purely
even case were discussed in
\cite{LO,DO,Lecras} for differential operators acting on densities and in
\cite{BHMP} for differential operators acting on forms.
\section{Construction of the quantization}\label{cons}
Here we will show how the ingredients of the construction of the quantization in the
purely even situation generalize and allow us to build the quantization in the super
situation. A first tool is the so-called \emph{affine quantization map}.
\subsection{The affine quantization map}
Let us build a linear even bijection $Q_{\mathrm{Aff}}$ from the space of symbols to the space of differential operators. This quantization is defined
as the inverse of the total symbol map $\sigma_{\mathrm{Aff}}$ whose restriction to
$\mathcal{D}^k_{\lambda,\mu}$ is
\[\sigma_{\mathrm{Aff}}\colon 
\mathcal{D}^k_{\lambda,\mu}\to \mathcal{S}^k_{\delta}\colon D\mapsto
\sum_{|\alpha|\leqslant k}f_\alpha(x,\theta)\otimes e_1^{\alpha_1}\vee\cdots \vee
e_p^{\alpha_p}\vee e_{p+1}^{\alpha_{p+1}}\vee\cdots\vee e_{p+q}^{\alpha_{p+q}}\]
when $D$ is given by (\ref{eqDiffop}). It is easy to see that this map intertwines the
actions of the affine algebra (made of constant and linear super vector fields) on the space of differential
operators and of symbols.

Now, we can use formula (\ref{real}) in order to express this quantization 
map in a coordinate-free manner:
\begin{proposition}
If $h_1,\ldots, h_k\in\R^{p|q}\cong \g_{-1}$, $t\in
\F$ and 
\[T=t\otimes h_1\vee\cdots\vee h_k,\]
 one has
\[Q_{\mathrm{Aff}}(T) = (-1)^k t\;L_{X^{h_1}}\circ\cdots\circ L_{X^{h_k}}.\]
\end{proposition}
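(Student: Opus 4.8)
The plan is to compute $Q_{\mathrm{Aff}}(T)$ directly by expanding the composition of Lie derivatives, using the explicit realization \eqref{real} of the vector fields $X^{h}$ associated with elements $h\in\g_{-1}\cong\R^{p|q}$. Recall that for $h=v\in\R^{p|q}$ one has $X^{v}=-\sum_{i}v^{i}\partial_{y^{i}}$, i.e. $X^{v}$ is a \emph{constant} super vector field. The first step is therefore to understand how the Lie derivative $L_{X^{v}}$ acts on $\S_{\delta}$: since $X^{v}$ has constant coefficients, the matrix $J_{i}^{j}=(-1)^{\tilde{y^i}\widetilde{X}+1}(\partial_{y^{i}}X^{j})$ in \eqref{eqRadouxLeites} vanishes identically, so $L_{X^{v}}$ reduces to $X^{v}$ acting only on the density/function factor: $L_{X^{v}}(t\otimes S)=X^{v}(t)\otimes S$. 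In particular the operators $L_{X^{h_1}},\ldots,L_{X^{h_k}}$ all act as (signed) constant-coefficient partial differential operators on the coefficient functions and commute with one another.

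Next I would identify the composite $L_{X^{h_1}}\circ\cdots\circ L_{X^{h_k}}$, viewed as an element of $\D_{\lambda,\mu}$ (it maps $\F_{\lambda}$ to $\F_{\mu}$ since each factor shifts the weight, and the weights telescope appropriately — here one should note that $L_{X^{v}}$ on densities is just $X^{v}$ because $\dive(X^{v})=0$). Unwinding \eqref{real}, $(-1)^{k}L_{X^{h_1}}\circ\cdots\circ L_{X^{h_k}}=(-1)^{k}(-1)^{k}\bigl(\sum_{i_1}h_1^{i_1}\partial_{y^{i_1}}\bigr)\circ\cdots\circ\bigl(\sum_{i_k}h_k^{i_k}\partial_{y^{i_k}}\bigr)$, so multiplying by the function $t$ on the left gives precisely the differential operator whose total symbol, computed via $\sigma_{\mathrm{Aff}}$, should be $t\otimes h_1\vee\cdots\vee h_k$. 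The key computation is thus to check that applying $\sigma_{\mathrm{Aff}}$ to $t\,L_{X^{h_1}}\circ\cdots\circ L_{X^{h_k}}$ recovers $T$: expanding the product $\prod_{m}\bigl(\sum_{i}h_m^{i}\partial_{y^{i}}\bigr)$ in the monomial basis \eqref{eqDiffop} and reading off the coefficient functions $f_{\alpha}$, the principal-symbol operator of Section~\ref{tens} turns the product of partials into the supersymmetric product $h_1\vee\cdots\vee h_k$, by the very definition of $\vee$ and the matching sign conventions in $\sigma_{\mathrm{Aff}}$ (the signs $\mathrm{sgn}(\sigma,\ldots)$ tracking the reordering of odd factors are exactly those produced by anticommuting the odd $\partial_{\theta}$'s).

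The main obstacle is bookkeeping of signs: one must check that the parity signs arising from (i) commuting the odd partial derivatives $\partial_{\theta^{j}}$ past each other and past the odd components $h_m^{i}$, and (ii) the signs $\mathrm{sgn}(\sigma,v_1,\ldots,v_k)$ built into the isomorphism between the symmetric algebra and symmetric tensors, agree. A clean way to organize this is to argue by induction on $k$: for $k=0$ the statement is the trivial identity $Q_{\mathrm{Aff}}(t)=t$ (multiplication operator); for the inductive step, use that $\sigma_{\mathrm{Aff}}$ intertwines the affine action (established just above in the excerpt) and that left-composition with $L_{X^{h}}$, $h\in\g_{-1}$, corresponds under $\sigma_{\mathrm{Aff}}$ to the operation $S\mapsto h\vee S$ on symbols — the latter fact itself following from the constant-coefficient computation above. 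This reduces everything to the single identity $\sigma_{\mathrm{Aff}}\bigl(t\,L_{X^{h}}\bigr)=t\otimes h$ together with compatibility of $\vee$-multiplication with the principal symbol, both of which are immediate from the definitions once the sign conventions are fixed.
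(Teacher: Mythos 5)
Your direct computation is correct and is essentially the argument the paper has in mind (the proposition is stated there without proof, as an immediate consequence of (\ref{real}) and the definition of $\sigma_{\mathrm{Aff}}$): since $\dive X^{h_m}=0$, each $L_{X^{h_m}}$ acts on densities as the constant-coefficient operator $-\sum_i h_m^i\partial_{y^i}$, the two factors $(-1)^k$ cancel, and the signs from reordering the odd $\partial_{\theta^j}$'s are exactly the signs $\mathrm{sgn}(\sigma,\ldots)$ built into the supersymmetric product, with repeated odd factors killing both sides. One caution about your optional inductive reorganization: the base identity is $\sigma_{\mathrm{Aff}}(t\,L_{X^{h}})=-\,t\otimes h$ (the minus sign being absorbed by the prefactor $(-1)^k$), and left-composition with $L_{X^{h}}$ corresponds to $-\,h\vee\cdot$ on affine symbols only when the operator it is composed with has constant coefficients — composing it to the left of $t\,(\cdots)$ also produces the lower-order term coming from $X^{h}(t)$ — so in the induction you should peel the factors off on the right, or keep $t$ outside the composition as in your direct argument.
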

\subsection{The map $\gamma$}
Using the affine quantization map, we can endow the space of symbols with a
structure of representation of $\Vect(\R^{p|q})$, 
isomorphic to $\mathcal{D}_{\lambda,\mu}$. Explicitly, we set
\[\mathcal{L}_XT=Q_{\mathrm{Aff}}^{-1}\circ\mathcal{L}_X\circ 
Q_{\mathrm{Aff}}(T)\]
for every $T\in\mathcal{S}_{\delta}$ and $X\in\Vect(\R^{p|q})$.

Then a projectively equivariant quantization corresponds through the map $Q_{\mathrm{Aff}}$ to
a $\mathfrak{pgl}(p+1|q)-$module isomorphism from
the representation $(\S_{\delta}, L)$ to the representation $(\S_{\delta},\L)$. Also note that the so-defined representation $\L$ on $\S_{\delta}$ depends explicitly on
the parameter $\lambda$ associated with $\mathcal{D}_{\lambda,\mu}$.

In order to measure the difference between these representations, the map 
\[\gamma \colon \g\to \gl(\S_{\delta},\S_{\delta}) \colon h\mapsto \gamma(h)=\L_{X^h}-L_{X^h}\]
was introduced in \cite{BM}. This map can be easily computed in coordinates,
and it has the same properties as in the classical situation (see \cite{BM}): 
\begin{proposition}\label{gamma0}
The map $\gamma$ vanishes on $\g_{-1}\oplus \g_0$. Moreover, for every $h\in\g_1$ and $k\in \N$, the restriction of $\gamma(h)$ to $\S^k_\delta$ has values in $\S^{k-1}_\delta$ and is a differential operator of order zero and
parity $\tilde{h}$ with constant coefficients.
\end{proposition}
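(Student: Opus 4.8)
The plan is to compute $\gamma(h)=\L_{X^h}-L_{X^h}$ directly in coordinates, exploiting the fact that $\L$ is conjugate to the commutator action on $\D_{\lambda,\mu}$ via $Q_{\mathrm{Aff}}$. First I would handle the vanishing on $\g_{-1}\oplus\g_0$: the vector fields $X^h$ associated with $h\in\g_{-1}$ are constant and those associated with $h\in\g_0$ are linear, so $X^h$ lies in the affine subalgebra. Since $\sigma_{\mathrm{Aff}}$ (hence $Q_{\mathrm{Aff}}$) intertwines the affine action on $\D_{\lambda,\mu}$ with the action $L$ on $\S_\delta$ (as already noted in the construction of $\sigma_{\mathrm{Aff}}$), we get $\L_{X^h}=L_{X^h}$ for all $h\in\g_{-1}\oplus\g_0$, i.e.\ $\gamma$ vanishes there. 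For $\g_0$ one can alternatively invoke Formula~(\ref{eqXA}) and the analogous formula for the commutator action to see the two representations agree.

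The substance is the case $h\in\g_1$. Here $X^h$ is a \emph{quadratic} vector field: by Formula~(\ref{real}), $X^\xi=\sum_j \xi_j y^j(-1)^{\tilde j}\euler$, so its coefficients are homogeneous of degree two. I would write an arbitrary symbol $T\in\S^k_\delta$ in the standard monomial form, apply $Q_{\mathrm{Aff}}$ to turn it into a differential operator of order $k$, then compute the supercommutator $\mathcal{L}_{X^\xi}(Q_{\mathrm{Aff}}(T))=L_{X^\xi}^\mu\circ Q_{\mathrm{Aff}}(T)-(-1)^{\tilde\xi\tilde T}Q_{\mathrm{Aff}}(T)\circ L_{X^\xi}^\lambda$ and push the result back through $\sigma_{\mathrm{Aff}}$. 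Because $X^\xi$ is quadratic, each differentiation $\partial_{y^i}$ that hits $X^\xi$ or its first derivatives produces a coefficient that is at most linear; commuting one $\partial_{y^i}$ past the product of $k$ derivatives in $Q_{\mathrm{Aff}}(T)$ therefore changes the differential order by at most one, and the terms of top order $k$ cancel between $\L_{X^\xi}$ and $L_{X^\xi}$ (these top-order terms are exactly what an affine-type computation would give). What survives in $\gamma(\xi)=\L_{X^\xi}-L_{X^\xi}$ is the order $k-1$ remainder; tracking it shows it is a differential operator whose coefficients are derivatives of the coefficients of $X^\xi$, hence \emph{constant} (degree-two polynomials differentiated appropriately), of order zero in the sense that it multiplies $T$ by constants and lowers the tensor degree by one, and of parity $\tilde\xi$ since $X^\xi$ carries the parity of $\xi$ and no further $y$-dependence is introduced. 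This is essentially the super-analogue of the computation in \cite{BM}, and I would cite that reference for the structure while supplying the sign bookkeeping forced by the parities $\tilde y^i$, $\tilde\xi$.

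The main obstacle I anticipate is purely the sign combinatorics: in the supercommutator one must carefully move $L_{X^\xi}^\mu$ and $L_{X^\xi}^\lambda$ across the $k$-fold product of odd/even derivatives in $Q_{\mathrm{Aff}}(T)$, keeping track of Koszul signs $(-1)^{\tilde y^i\tilde y^j}$ each time a derivative is transposed, and of the density-weight contributions $\lambda\,\dive(X^\xi)$ versus $\mu\,\dive(X^\xi)$ — the latter difference being $\delta\,\dive(X^\xi)$, which is itself linear in $y$ and so contributes only to lower order. Once one is convinced the degree drops by exactly one and that no $y$'s remain in the surviving coefficients, the claims (order zero, constant coefficients, parity $\tilde h$, image in $\S^{k-1}_\delta$) all read off from the explicit remainder. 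I would therefore organise the proof as: (i) affine intertwining gives vanishing on $\g_{-1}\oplus\g_0$; (ii) a coordinate computation of $\gamma(\xi)$ on a monomial symbol; (iii) inspection of the resulting formula to extract the four stated properties.
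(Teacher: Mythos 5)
Your proposal is correct and follows essentially the route the paper intends: the paper gives no detailed proof, saying only that $\gamma$ "can be easily computed in coordinates" with the same properties as in the classical case of \cite{BM}, and your argument (affine intertwining of $Q_{\mathrm{Aff}}$ for the vanishing on $\g_{-1}\oplus\g_0$, then a coordinate commutator computation for quadratic $X^h$, $h\in\g_1$, whose surviving remainder has order $k-1$ with constant coefficients since third derivatives of quadratic coefficients vanish) is precisely that computation. The only looseness is the phrase about coefficients being "derivatives of the coefficients of $X^\xi$, hence constant" — strictly it is the second derivatives of $X^\xi$ and the first derivatives of $\mathrm{div}(X^\xi)$ that appear — but this does not affect the argument.
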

\begin{remark}
It follows from Proposition \ref{gamma0} that $\gamma(h)$ is completely determined by its restriction to
constant symbols.
\end{remark}
For our purpose, it will also be interesting to obtain a coordinate-free
expression of $\gamma$. To this aim, we first define the interior product of a row
vector and a symmetric tensor.
\begin{definition}\label{interior}
 The interior product of $h\in(\R^{p|q})^*$ in a symmetric tensor $h_1\vee\cdots\vee
h_k$ $(h_1,\ldots, h_k\in\R^{p|q})$ is defined by
\[i(h)h_1\vee\cdots\vee
h_k=\sum_{j=1}^k(-1)^{\tilde{h}(\sum_{r=1}^{j-1}\tilde{h_r})}\langle h,h_j\rangle
h_1\vee\cdots \widehat{j}\cdots\vee h_k,\] 
where $\langle h,x\rangle$ denotes the standard matrix multiplication of the row
$h$ by the column $x$.
\end{definition}
This means that we extend the pairing of $(\R^{p|q})^*$ and $\R^{p|q}$ as a derivation
of the symmetric product. We may also extend it to $\mathcal{S}_\delta$ by setting
$i(h)u=0$ for $u$ in $B^\delta$ and by defining $i(h)$ as a differential operator of
order zero and parity $\tilde{h}$. This definition allows us to express the operator
$\gamma$ in simple terms.
\begin{proposition}\label{gamma1}
For every $h\in \g_1\cong (\R^{p|q})^*$ we have on ${\mathcal S}^k_\delta$
\[\gamma(h)=-(\lambda(p-q+1)+k-1)i(h),\]
where $i(h)$ is the interior product from Definition \ref{interior}.
\end{proposition}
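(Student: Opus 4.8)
The plan is to compute $\gamma(h)$ explicitly on constant symbols, since by the Remark following Proposition \ref{gamma0} the operator $\gamma(h)$ on $\S^k_\delta$ is a constant-coefficient differential operator of order zero, hence determined by its action on elements of the form $u\otimes e_{i_1}\vee\cdots\vee e_{i_k}$ with $u$ a generator of $B^\delta$. First I would pick a generating element $h=\xi\in\g_1\cong(\R^{p|q})^*$, so that by Formula (\ref{real}) the associated vector field is $X^\xi=\sum_{j}\xi_j y^j(-1)^{\tilde j}\euler$, a quadratic vector field. By definition $\gamma(\xi)=\L_{X^\xi}-L_{X^\xi}$, where $L_{X^\xi}$ is the natural Lie derivative on symbols from Definition \ref{defsymb} (Formula (\ref{eqRadouxLeites})) and $\L_{X^\xi}=Q_{\mathrm{Aff}}^{-1}\circ\mathcal{L}_{X^\xi}\circ Q_{\mathrm{Aff}}$ is the transported action coming from differential operators.

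The key computational step is to evaluate both terms on a constant symbol $T=u\otimes h_1\vee\cdots\vee h_k$ with $h_1,\dots,h_k\in\R^{p|q}$. For $L_{X^\xi}(T)$ I would apply Formula (\ref{eqRadouxLeites}) directly: since $\xi$ is the restriction of a matrix in $\mathfrak{pgl}(p+1|q)$ corresponding under $j$ to a purely upper-triangular contribution, the term $X^\xi(f)$ vanishes on the constant function and the remaining term involves $J_i^j=(-1)^{\tilde{y^i}\widetilde X+1}\partial_{y^i}(X^\xi)^j$, which I would compute from the explicit form of $X^\xi$; this produces the interior-product piece together with the density contribution coming from $\rho(A)u=-\delta\,\str(A)u$ on $B^\delta$. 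For $\L_{X^\xi}(T)$ I would use the Proposition giving $Q_{\mathrm{Aff}}(T)=(-1)^k t\,L_{X^{h_1}}\circ\cdots\circ L_{X^{h_k}}$ (here $t=u$ constant), then compute $\mathcal{L}_{X^\xi}$ of this differential operator as the supercommutator $L^\mu_{X^\xi}\circ D-(-1)^{\tilde X\tilde D}D\circ L^\lambda_{X^\xi}$, and finally apply $\sigma_{\mathrm{Aff}}$ (i.e. $Q_{\mathrm{Aff}}^{-1}$) to extract the symbol. The commutators $[L^\cdot_{X^\xi},L_{X^{h_r}}]$ can be handled using the Lie-algebra structure of $\mathfrak{pgl}(p+1|q)$: since $X^\xi$ is degree $+1$ and each $X^{h_r}$ is degree $-1$ in the grading (\ref{eq:j}), the bracket $[X^\xi,X^{h_r}]$ lands in $\g_0=\gl(p|q)$, and its action on symbols is governed by the simple Formula (\ref{eqXA}) through $\rho$; moreover the relevant $\g_0$-element pairs $\xi$ with $h_r$ and contributes a multiple of $\euler=X^{-\Id}$, whose representation $\rho(-\Id)$ scales $\S^k$ by $k$ on the tensor factor and by $\delta(p-q)$ on the density factor. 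Carefully tracking how this commutator propagates through the $k$-fold composition (each of the $k$ slots contributing one contraction, with signs from the super-commutation rules) yields the coefficient.

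The main obstacle I expect is bookkeeping: getting all the Koszul signs right when moving $X^\xi$ past the compositions $L_{X^{h_1}}\circ\cdots\circ L_{X^{h_k}}$, and reconciling the normalization conventions between $\lambda$, $\mu$, $\delta=\mu-\lambda$, and the supertrace factor $\str(\Id_{p|q})=p-q$ that appears when the $\g_0$-part of the commutator hits the density generator $u$. In particular the coefficient $\lambda(p-q+1)+k-1$ should emerge as follows: the $k-1$ (rather than $k$) reflects that one of the $k$ contractions with $\xi$ is ``used up'' producing the interior product itself while the remaining $k-1$ slots each contribute a unit from $\rho$ acting via the Euler-type term; the $\lambda(p-q+1)$ assembles the density contribution, where the $p-q$ is $\str\Id$ and the extra $+1$ comes from the fact that $X^\xi$ acts between $\F_\lambda$ and $\F_\mu$ and the degree-$+1$ generator carries the extra shift built into the projective realization (\ref{real}) via $\euler$. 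Once the constant-symbol computation is done, Proposition \ref{gamma0} and the Remark immediately upgrade it to the stated identity on all of $\S^k_\delta$.
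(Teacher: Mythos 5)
Your outline is in fact the paper's own strategy (reduce to constant symbols via Proposition \ref{gamma0}, write $Q_{\mathrm{Aff}}(\gamma(h)T)=\mathcal{L}_{X^h}Q_{\mathrm{Aff}}(T)-Q_{\mathrm{Aff}}(L_{X^h}T)$ with $Q_{\mathrm{Aff}}(T)=(-1)^k\,L_{X^{h_1}}\circ\cdots\circ L_{X^{h_k}}$, and exploit the grading to reduce everything to brackets), but the two computational claims on which your coefficient count rests are wrong. First, for $h\in\g_1$ and $h_i\in\g_{-1}$ the bracket $[h,h_i]\in\g_0$ is \emph{not} a multiple of the Euler element: under the identification (\ref{eq:j}) it is, up to parity signs, the rank-one matrix $h_i\,h$ plus $\langle h,h_i\rangle\,\Id$. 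The scalar part is what produces the factor $(p-q+1)$ (through the supertrace, i.e.\ through the zeroth-order term $\lambda\,\dive(X^{[h,h_i]})$ of the Lie derivative on densities), while the rank-one part is indispensable for the step your sketch omits entirely: after the first commutation, the linear field $L_{X^{[h,h_i]}}$ still sits inside a composition of order $k$, and to extract the order-$(k-1)$ symbol one must commute it past the remaining constant fields, producing the double brackets $[h_j,[h,h_i]]=\langle h,h_i\rangle h_j+(-1)^{\tilde h\tilde{h_j}}\langle h,h_j\rangle h_i\in\g_{-1}$. It is the sum of these constant-coefficient order-$(k-1)$ operators that yields the $k-1$ together with the interior-product structure; keeping only an ``Euler-type'' scalar loses the second summand of the double bracket and gives a wrong count.

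Second, your weight bookkeeping is inconsistent and would not produce $\lambda(p-q+1)$. The term $L_{X^h}T$ computed from (\ref{eqRadouxLeites}) contributes no interior-product piece at all: for constant $T$ and quadratic $X^h$ it is a degree-$k$ symbol with coefficients linear in $y$, and its only role after applying $Q_{\mathrm{Aff}}$ is to cancel the degree-$k$ part of $\mathcal{L}_{X^h}Q_{\mathrm{Aff}}(T)$. This is precisely where all $\delta$-dependence (your ``$\rho(-\Id)$ scales the density factor by $\delta(p-q)$'') is confined; similarly the mismatch between $L^\mu_{X^h}$ and $L^\lambda_{X^h}$ contributes $\delta\,\dive(X^h)$ times a constant-coefficient order-$k$ operator, which is again purely of degree $k$ once normal-ordered with the function on the left. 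Hence the degree-$(k-1)$ part is governed by $\lambda$ alone, which is why neither $\delta$ nor $\mu$ appears in the statement. A concrete sanity check: for $p=1$, $q=0$, $k=2$, $h=\varepsilon^1$, one finds $\mathcal{L}_{X^h}(\partial_x^2)=(2\delta-4)x\partial_x^2-(4\lambda+2)\partial_x$, whose constant order-one part $-(4\lambda+2)\partial_x$ reproduces $-(2\lambda+1)\,i(h)$ exactly as in the double-bracket mechanism, whereas the heuristic in your last paragraph (Euler scaling on the tensor factor plus $\delta(p-q)$ on the density factor) does not. So to turn the plan into a proof you must use the correct form of $[h,h_i]$, carry out the second commutation producing the double brackets, and keep the $\lambda$/$\delta$ contributions separated as above.
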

\begin{proof}
Since both sides of the equality are differential operators of order zero on
${\mathcal S}^k_\delta$, it is sufficient to prove that they yield the same result
when applied to a tensor $h_1\vee\cdots\vee h_k$ ($h_1,\ldots, h_k\in\R^{p|q}$). But
by the definition of $\gamma$, the expression 
\[Q_{\mathrm{Aff}}(\gamma(h)(h_1\vee\cdots\vee h_k))\]
is equal to
\begin{equation}\label{gamma2}\mathcal{L}_{X^{h}}\circ
Q_{\mathrm{Aff}}(h_{1}\vee\cdots\vee h_{k})-
Q_{\mathrm{Aff}}(L_{X^{h}}(h_{1}\vee\cdots\vee h_{k})).\end{equation}
This expression is a differential operator of order at most $k$. Its  term of
order $k$ vanishes: just apply the operator
$\sigma_k$ to (\ref{gamma2}). Hence, we only have to sum up the terms of
order less than or equal to $k-1$ in the first term of expression (\ref{gamma2}).
This latter term writes
\[(-1)^{k}[L_{X^{h}}\circ L_{X^{h_{1}}}\circ\cdots\circ L_{X^{h_{k}}}
-(-1)^{\tilde{h}(\tilde{h_{1}}+\cdots+\tilde{h_{k}})}
 L_{X^{h_{1}}}\circ\cdots\circ L_{X^{h_{k}}}\circ L_{X^{h}}].\]
Using that the Lie derivative defines a representation of the algebra of vector
fields, we can successively move $L_{X^{h}}$ to the right in the first term to see
that the expression is equal to
\[
   (-1)^{k}\sum_{i=1}^k (-1)^{\tilde{h}(\sum_{r=1}^{i-1}\tilde{h_{r}})}
L_{X^{h_1}}\circ\cdots\circ \underbrace{L_{X^{[h,h_i]}}}_{(i)}\circ\cdots \circ
L_{X^{h_k}}.
 \]
Moving $L_{X^{[h,h_i]}}$ back to the left shows that the expression is also equal to
\begin{multline*}
 (-1)^{k}[\sum_{i=1}^k(-1)^{\tilde{h_i}(\sum_{r=1}^{i-1}\tilde{h_{r}})}
L_{X^{[h,h_i]}}\circ
L_{X^{h_1}}\circ\cdots \widehat{i}\cdots \circ L_{X^{h_k}}\\
+ 
\sum_{i=1}^k\sum_{j=1}^{i-1}(-1)^{\tilde{h}(\sum_{r=1}^{i-1}\tilde{h_{r}})+(\tilde{h}+\tilde{h_{i}})(\sum_{r=j+1}^{i-1}\tilde{h_{r}})}
 L_{X^{h_1}}\circ\cdots\underbrace{L_{X^{[h_j,[h,h_i]]}}}_{(j)}\cdots\widehat{i}\cdots\circ
L_{X^{h_k}}].
\end{multline*}
It follows from Equations (\ref{eqRadouxLeites}) and (\ref{eqXA}) that the term of
order less than or equal to $k-1$ in the first summand is exactly
\[(-1)^{k}\sum_{i=1}^k\lambda(p-q+1)(-1)^{\tilde{h_i}(\sum_{r=1}^{i-1}\tilde{h_{r}})}\langle
h,h_i\rangle
L_{X^{h_1}}\circ\cdots\widehat{i}\cdots \circ L_{X^{h_k}}.\]
In order to deal with the second term, we notice that $\langle h,h_{j}\rangle$
vanishes if $h$ and $h_j$ do not have the same parity, we compute the bracket
\[\begin{array}{lll}[h_j,[h,h_i]]&=&\langle h,h_{i}\rangle
h_{j}+(-1)^{\tilde{h_{j}}(\tilde{h_{i}}+\tilde{h})+\tilde{h_i}\tilde{h}}\langle
h,h_{j}\rangle h_{i}\\
   &=&\langle h,h_{i}\rangle h_{j}+(-1)^{\tilde{h}\tilde{h_{j}}}\langle
h,h_{j}\rangle h_{i},
 \end{array}\]
and the result follows.
\end{proof}
\subsection{Casimir operators}
The construction of the quantizations in the even case can be achieved easily by
using Casimir operators. After recalling the definition of second order Casimir operators for Lie superalgebras \cite[p. 57]{KacSketch} (also see \cite{Ber87,Pin90,Mus97,Ser99,SerLei02} and references therein for detailed descriptions of Casimir elements), we specialize to $\mathfrak{pgl}(p+1|q)\cong\sla(p+1|q)$ (thus assuming $q\not=p+1$). We consider the Casimir operators $C$ and $\mathcal{C}$ associated
with the representations $L$ and $\mathcal{L}$ on ${\mathcal S}_\delta$. We show that
the Casimir operator $C$ is diagonalizable and that there is a simple relation
between these operators. As in the purely even case, the analysis of the eigenvector
problem for these operators allows us to build the quantization. 
\begin{definition}
Consider a Lie superalgebra $\mathfrak{l}$ endowed with an even non-degenerate supersymmetric bilinear
form $F$ and a representation $(V,\beta)$ of $\mathfrak{l}$. Choosing a homogeneous basis
$(u_i\colon i\leqslant n)$ of $\mathfrak{l}$ and denoting by $(u'_i\colon i\leqslant n)$ the
$F$-dual basis ($F(u_i,u'_j)=\delta_{i,j}$), the Casimir operator of
$(V,\beta)$ is defined by 
\[C=\sum_{i=1}^n(-1)^{\tilde{u_i}}\beta(u_i)\beta(u'_i)=\sum_{i=1}^n\beta(u'_i)\beta(u_i).\]
\end{definition}
Recall that the Killing form $K_\phi$ associated with a representation $(V,\phi)$ of a Lie superalgebra $\mathfrak{l}$ is defined by
\[K_{\phi}(A,B)=\str(\phi(A)\phi(B))\]
for all $A,B$ in $\mathfrak{l}$. Unless otherwise stated, we will only consider the Killing form associated with the adjoint representation of the superalgebra, which we will denote by $K$ and call the Killing form of the superalgebra. 
Then the Killing form of $\sla(p+1|q)$ is given by 
\begin{equation}\label{Killing}K(A,B)=\str(ad(A)ad(B))=2(p+1-q)\,\str(AB)\end{equation}
for all $A$ and $B$ in $\sla(p+1|q)$. Under our assumption $q\neq p+1$, it is a non-degenerate even supersymmetric bilinear
form on this algebra. 

The Casimir operator does not depend on the choice of a particular basis. We thus choose a basis that is homogeneous with respect to the $\Z$-grading of the algebra. 
\begin{definition}\label{canbases}
 For $r\leqslant p+q$, we denote by $e_r$ (resp. $\varepsilon^r$) the column (resp. row) vector in $\R^{p|q}$ (resp. $(\R^{p|q})^*$) whose entries are 0, except the $r$-th one which is 1.  We also set $\epsilon^r=\frac{(-1)^{\tilde{r}}}{2(p-q+1)}\varepsilon^r$.
\end{definition}
The following proposition is a direct transposition in the super setting of \cite[Proposition 1]{BM}. It can be checked directly.
\begin{proposition}\label{particbases}
For every basis $(g_s,s\leqslant \mathrm{dim}\,\g_0)$ of $\g_0$, the set $(e_r,g_s,\epsilon^t)$ is a basis of $\sla(p+1|q)$ 
whose Killing-dual basis writes $(\epsilon^r,g_s',(-1)^{\tilde{t}}e_t)$, where $(g'_s,s\leqslant \mathrm{dim}\,\g_0)$ is a basis of $\g_0$.
Moreover, we have
\begin{equation}\label{crochet}
\sum_{r=1}^{p+q}(-1)^{\tilde{r}}[e_r,\epsilon^r]=-\frac{1}{2}\euler.
\end{equation}
In the generic situation where $p\not=q$, we can choose a basis of $\g_0$ by selecting a basis $(h_s)$ of $\h_0$ and also the Euler element $\euler$. Then the Killing-dual basis of $(e_r,h_s,\euler,\epsilon^t)$ is $(\epsilon^r,h_s',\frac{\euler}{2(p-q)},(-1)^{\tilde{t}}e_t)$, where $(h'_s)$ is a basis of $\h_0$.
\end{proposition}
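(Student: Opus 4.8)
The plan is to verify Proposition~\ref{particbases} by a direct computation, relying only on the explicit description of the Killing form in~(\ref{Killing}) and the standard supertrace pairing on $\sla(p+1|q)$. First I would set up the ambient bookkeeping: write a general element of $\sla(p+1|q)\cong\mathfrak{pgl}(p+1|q)$ in the block form $\left[\left(\begin{smallmatrix}a&\xi\\h&A\end{smallmatrix}\right)\right]$ dictated by~(\ref{eq:j}), so that the grading pieces are $\g_{-1}=\R^{p|q}$ (the column $h$), $\g_0=\gl(p|q)$ (the block $A$, modulo the identification built into $j$), and $\g_1=(\R^{p|q})^*$ (the row $\xi$). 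The vectors $e_r$ span $\g_{-1}$, the $\epsilon^t$ span $\g_1$, and $(g_s)$ spans $\g_0$; so $(e_r,g_s,\epsilon^t)$ is manifestly a basis of the right size, and the only content is the identification of the Killing-dual basis and the bracket identity~(\ref{crochet}).

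The key computation is to evaluate $K$ on pairs of basis elements. Since $K(A,B)=2(p+1-q)\,\str(AB)$, I would reduce everything to computing supertraces of products of the block matrices representing $e_r$, $g_s$, $\epsilon^t$. The grading is compatible with $K$ in the sense that $K(\g_i,\g_j)=0$ unless $i+j=0$ (because $\ad(\euler)$ is a derivation that is an isometry of $K$, or simply because the relevant matrix products are strictly block-off-diagonal and hence supertraceless); this immediately shows that the dual of $e_r$ must lie in $\g_1$, the dual of $\epsilon^t$ in $\g_{-1}$, and the duals of the $g_s$ in $\g_0$. Then I would compute $K(e_r,\epsilon^t)$ explicitly: the product of the elementary column matrix $e_r$ (sitting in the $(\ast,0)$ position) with $\epsilon^t=\frac{(-1)^{\tilde t}}{2(p-q+1)}\varepsilon^t$ (sitting in the $(0,\ast)$ position) picks out a single rank-one block, and its supertrace carries exactly the sign $(-1)^{\tilde t}$ needed to cancel the $(-1)^{\tilde t}$ in the definition of $\epsilon^t$, leaving $K(e_r,\epsilon^t)=\delta_{r}^{t}$ after the normalization by $2(p-q+1)$. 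Hence $(\epsilon^r)$ is $K$-dual to $(e_r)$ and, symmetrically, $((-1)^{\tilde t}e_t)$ is $K$-dual to $(\epsilon^t)$; the sign $(-1)^{\tilde t}$ reappears here because now the row sits on the left. The statement that the $\g_0$-duals of $(g_s)$ form some basis $(g_s')$ of $\g_0$ is then automatic from nondegeneracy of $K$ restricted to $\g_0$ (under $q\neq p+1$).

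For the bracket identity~(\ref{crochet}), I would compute $[e_r,\epsilon^r]$ directly from the matrix commutator in block form. Writing $e_r$ as the elementary matrix $E_{r,0}$ and $\epsilon^r$ as $\frac{(-1)^{\tilde r}}{2(p-q+1)}E_{0,r}$, the supercommutator $[E_{r,0},E_{0,r}]$ (with appropriate Koszul signs) produces $E_{r,r}-(\text{sign})E_{0,0}$, i.e.\ a diagonal matrix; summing $(-1)^{\tilde r}$ times this over $r$, using $\sum_r(-1)^{\tilde r}=p-q$ for the $(0,0)$-entry contribution and the normalization $\frac{1}{2(p-q+1)}$, collapses to a multiple of the identity-type diagonal element, which under $j^{-1}$ is precisely the Euler element; tracking constants gives the coefficient $-\tfrac12$. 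Finally, in the case $p\neq q$ I would split $\g_0=\h_0\oplus\R\euler$ and just recompute $K(\euler,\euler)$: from~(\ref{Killing}) this is $2(p+1-q)\,\str(\euler^2)$ where $\euler$ corresponds to the matrix with $\str$-relevant blocks giving $\str(\euler^2)=p-q$ (one checks this against the grading normalization $\ad(\euler)|_{\g_k}=k\Id$), so $K(\euler,\euler)=2(p+1-q)(p-q)$ — wait, I should be careful: the claim is that the dual is $\frac{\euler}{2(p-q)}$, so I need $K(\euler,\euler)=2(p-q)$, which pins down the precise matrix representative of $\euler$ in $\gl(p+1|q)$ via the map $j$; I would verify this is consistent, and orthogonality $K(\euler,\h_0)=0$ is clear since $\euler$ is a multiple of the identity and elements of $\h_0$ are supertraceless.

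The main obstacle is purely one of sign- and normalization-bookkeeping: the superalgebra setting forces Koszul signs $(-1)^{\tilde r}$, $(-1)^{\tilde j(\tilde i+\tilde j)}$ at every matrix multiplication and in the definition of $\str$, and the quotient $\mathfrak{pgl}\to\sla$ (or the identification $j$) shifts representatives by multiples of the identity, which interacts delicately with supertraces of diagonal matrices. I expect no conceptual difficulty — the grading compatibility of $K$ does all the structural work — but getting the factor $-\tfrac12$ in~(\ref{crochet}) and the factor $\frac{1}{2(p-q)}$ for the Euler dual exactly right requires carefully fixing the matrix representative of $\euler$ and consistently using the same sign convention for $\str$ and for the supercommutator throughout; this is exactly why the proposition is asserted to ``be checked directly'' rather than proved in detail here.
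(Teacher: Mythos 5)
Your proposal is correct and is essentially the paper's own approach: the paper gives no argument beyond observing that the statement transposes \cite[Proposition 1]{BM} to the super setting and ``can be checked directly'', and your direct supertrace computation (grading orthogonality of $K$, evaluation of $K(e_r,\epsilon^t)$, reduction of $\sum_r(-1)^{\tilde r}[e_r,\epsilon^r]$ modulo $\R\Id$) is exactly that check. The one point you flagged resolves in favour of the stated dual: the formula $K(A,B)=2(p+1-q)\,\str(AB)$ may only be applied to supertraceless representatives, i.e.\ to $\iota(\euler)=E_{0,0}-\frac{1}{p+1-q}\Id$ where $E_{0,0}$ is the matrix with a single $1$ in the upper left corner (not to the representative $\mathrm{diag}(0,-\Id)$, which is what produces your value $\str(\euler^2)=p-q$ and the mismatch $2(p+1-q)(p-q)$), and then $\str\bigl(\iota(\euler)^2\bigr)=\frac{p-q}{p+1-q}$, hence $K(\euler,\euler)=2(p-q)$ as required; equivalently, $K(\euler,\euler)=\str(\ad(\euler)^2)=2\,\mathrm{sdim}\,\R^{p|q}=2(p-q)$ directly from the grading property $\ad(\euler)|_{\g_k}=k\,\Id_{\g_k}$.
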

Using this particular choice of a basis, we obtain directly a decomposition of the
Casimir operator of any representation of $\sla(p+1|q)$ that is consistent with
the additionnal $\Z$-grading of this algebra (see Formula (\ref{eq:j})).
\begin{lemma}\label{lemma34}
 For any representation $(V,\beta)$ of the algebra $\sla(p+1|q)$, the Casimir
operator of $(V,\beta)$ is given by
\begin{equation}\label{Casim0}
2\sum_{r=1}^{p+q}\beta(\epsilon^r)\beta(e_r)-\frac{1}{2}\beta(\euler)+\sum_s(-1)^{\tilde{g_s}}\beta(g_s)\beta(g'_s).
\end{equation}
When $p\not=q$, this expression can be further developed in
\begin{equation}\label{Casim1}
2\sum_{r=1}^{p+q}\beta(\epsilon^r)\beta(e_r)+\frac{1}{2(p-q)}\beta(\euler)^2-\frac{1}{2}\beta(\euler)+\sum_s(-1)^{\tilde{h_s}}\beta(h_s)\beta(h'_s).
\end{equation}
\end{lemma}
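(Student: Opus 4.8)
The plan is to substitute the basis from Proposition~\ref{particbases} directly into the definition of the Casimir operator. Recall that for the basis $(e_r, g_s, \epsilon^t)$ of $\sla(p+1|q)$, the Killing-dual basis is $(\epsilon^r, g_s', (-1)^{\tilde t} e_t)$. By the second expression in the definition of the Casimir operator, namely $C = \sum_i \beta(u_i')\beta(u_i)$, I would pair each basis element with its dual and split the sum into three groups according to the $\Z$-grading: the $\g_{-1}$ part contributes $\sum_r \beta(\epsilon^r)\beta(e_r)$, the $\g_0$ part contributes $\sum_s (-1)^{\tilde{g_s}}\beta(g_s')\beta(g_s) = \sum_s (-1)^{\tilde{g_s}}\beta(g_s)\beta(g_s')$ (after checking the Killing form pairs $g_s$ with $g_s'$ of the same parity), and the $\g_1$ part contributes $\sum_t (-1)^{\tilde t}\beta((-1)^{\tilde t} e_t)\beta(\epsilon^t) = \sum_t \beta(e_t)\beta(\epsilon^t)$.

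The key step is then to rewrite the $\g_1$ contribution $\sum_r \beta(e_r)\beta(\epsilon^r)$ in terms of $\sum_r \beta(\epsilon^r)\beta(e_r)$ using the supercommutator: since $e_r$ has parity $\tilde r$ and $\epsilon^r$ also has parity $\tilde r$, the product $\tilde{e_r}\tilde{\epsilon^r} \equiv \tilde r \pmod 2$, so $\beta(e_r)\beta(\epsilon^r) = \beta(\epsilon^r)\beta(e_r) + (-1)^{\tilde r}\beta([e_r,\epsilon^r])$. Summing with the weight $(-1)^{\tilde r}$ and using Formula~(\ref{crochet}), which gives $\sum_r (-1)^{\tilde r}[e_r,\epsilon^r] = -\tfrac12 \euler$, the cross terms collapse to $-\tfrac12 \beta(\euler)$. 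Adding the two copies of $\sum_r \beta(\epsilon^r)\beta(e_r)$ (one from the $\g_{-1}$ group, one from the rewritten $\g_1$ group) yields the factor $2$ in front, and we obtain precisely Formula~(\ref{Casim0}). I should be careful about the bookkeeping of signs in the dual basis — in particular verifying that $\beta\big((-1)^{\tilde t} e_t\big)\beta(\epsilon^t)$ really does simplify to $\beta(e_t)\beta(\epsilon^t)$ after the extra factor $(-1)^{\tilde t}$ from the Casimir sign rule cancels the $(-1)^{\tilde t}$ from the dual basis — but this is the kind of routine check that the proposition says "can be checked directly."

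For Formula~(\ref{Casim1}), I would specialize the generic case $p\neq q$. Here Proposition~\ref{particbases} tells us to take the basis of $\g_0$ to be $(h_s, \euler)$ with dual basis $(h_s', \tfrac{\euler}{2(p-q)})$. Splitting the $\g_0$ sum $\sum_s (-1)^{\tilde{g_s}}\beta(g_s)\beta(g_s')$ accordingly, the $\h_0$ part gives $\sum_s (-1)^{\tilde{h_s}}\beta(h_s)\beta(h_s')$ and the Euler part gives $\beta(\euler)\beta\big(\tfrac{\euler}{2(p-q)}\big) = \tfrac{1}{2(p-q)}\beta(\euler)^2$ (the sign is trivial since $\euler$ is even). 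Substituting into (\ref{Casim0}) produces (\ref{Casim1}). The main obstacle — really the only subtle point — is the consistent tracking of the parity signs when passing from $\beta(u_i')\beta(u_i)$ to the grouped form and when commuting $\beta(e_r)$ past $\beta(\epsilon^r)$; once (\ref{crochet}) is invoked correctly, everything else is bookkeeping.
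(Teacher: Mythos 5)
Your overall plan -- substitute the basis of Proposition \ref{particbases} into the definition of the Casimir operator, split the sum according to the $\Z$-grading, reorder the degree $\pm 1$ blocks with the supercommutator and Formula (\ref{crochet}), and refine the $\g_0$-block with the basis $(h_s,\euler)$ and its dual when $p\neq q$ -- is exactly the computation the paper intends (the lemma is stated as following ``directly'' from Proposition \ref{particbases}), and your treatment of (\ref{Casim1}) is fine. The problem is that the sign bookkeeping, which is the entire content of the lemma, is not consistent as written. If you really use the second expression $C=\sum_i\beta(u_i')\beta(u_i)$, the three blocks are $\sum_r\beta(\epsilon^r)\beta(e_r)$, $\sum_s\beta(g_s')\beta(g_s)$ (with no sign) and $\sum_t\beta\bigl((-1)^{\tilde t}e_t\bigr)\beta(\epsilon^t)=\sum_t(-1)^{\tilde t}\beta(e_t)\beta(\epsilon^t)$. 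Your $\g_0$- and $\g_1$-blocks instead carry the prefactor $(-1)^{\tilde{u_i}}$ belonging to the \emph{first} expression while keeping the ordering of the second; in particular your claimed $\g_1$-contribution $\sum_t\beta(e_t)\beta(\epsilon^t)$ differs from the correct one by $(-1)^{\tilde t}$ on the odd indices, so the decomposition you start from is not equal to $C$.

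The later steps contain compensating slips rather than a repair: the supercommutator should read $\beta(e_r)\beta(\epsilon^r)=(-1)^{\tilde r}\beta(\epsilon^r)\beta(e_r)+\beta([e_r,\epsilon^r])$ (the parity sign sits on the reordered product, not on the bracket), and the ``weight $(-1)^{\tilde r}$'' you then sum with is absent from the contribution you had derived. Followed literally, your $\g_1$-block becomes $\sum_t(-1)^{\tilde t}\beta(\epsilon^t)\beta(e_t)+\beta\bigl(\sum_t[e_t,\epsilon^t]\bigr)$, and neither term matches (\ref{Casim0}), nor is the unweighted bracket sum what Formula (\ref{crochet}) computes. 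The clean way to run your argument is to use the first expression $C=\sum_i(-1)^{\tilde{u_i}}\beta(u_i)\beta(u_i')$: the $\g_1$-block then gives $\sum_t(-1)^{\tilde t}\beta(\epsilon^t)\beta\bigl((-1)^{\tilde t}e_t\bigr)=\sum_t\beta(\epsilon^t)\beta(e_t)$ with no commutation needed, the $\g_0$-block is already $\sum_s(-1)^{\tilde{g_s}}\beta(g_s)\beta(g_s')$, and the $\g_{-1}$-block is reordered via $(-1)^{\tilde r}\beta(e_r)\beta(\epsilon^r)=\beta(\epsilon^r)\beta(e_r)+(-1)^{\tilde r}\beta([e_r,\epsilon^r])$, which after summation and (\ref{crochet}) yields the second copy of $\sum_r\beta(\epsilon^r)\beta(e_r)$ together with $-\tfrac12\beta(\euler)$. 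Finally, your swap $\sum_s(-1)^{\tilde{g_s}}\beta(g_s')\beta(g_s)=\sum_s(-1)^{\tilde{g_s}}\beta(g_s)\beta(g_s')$ needs more than the observation that $g_s$ and $g_s'$ have equal parity: term by term it is false, and the summed identity requires $\sum_s(-1)^{\tilde{g_s}}[g_s,g_s']=0$, i.e. the equality of the two expressions of the Casimir element applied to $\g_0$ with the restricted invariant form.
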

We now introduce a new operator in order to compare the Casimir operators $C$ and
$\cc$.
\begin{definition}\label{defN}
Using the notation of Proposition \ref{particbases}, we define
\[N\colon \mathcal{S}^k_\delta\to\mathcal{S}^{k-1}_\delta \colon S\mapsto
2\sum_i\gamma(\epsilon^i)L_{X^{e_i}}S.\]
\end{definition}
Using Lemma \ref{lemma34} for both $C$ and $\cc$, and recalling that $\L_{X^h}=L_{X^h}$
 for any $h\in\g_{-1}\oplus\g_0$, we obtain directly,
as in \cite{BM}, the following result.
\begin{proposition}\label{relcas}
The Casimir operators are related by
\begin{equation}\label{casinil}\cc=C+N.\end{equation}
\end{proposition}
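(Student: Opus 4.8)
The plan is to read off the two Casimir operators $C$ and $\cc$ from the decomposition of Lemma~\ref{lemma34} and to compare them term by term. Since the Casimir operator is independent of the chosen homogeneous basis, I would compute both $C$ and $\cc$ using the single $\Z$-graded basis $(e_r,g_s,\epsilon^t)$ of $\sla(p+1|q)$ supplied by Proposition~\ref{particbases}, so that formula (\ref{Casim0}) gives
\[
C=2\sum_{r}L_{X^{\epsilon^r}}L_{X^{e_r}}-\tfrac12 L_{X^{\euler}}+\sum_s(-1)^{\tilde{g_s}}L_{X^{g_s}}L_{X^{g'_s}},
\]
and the same expression with $L$ replaced throughout by $\cc$'s defining representation $\L$ gives $\cc$.

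Next I would invoke Proposition~\ref{gamma0}: the vectors $e_r$ lie in $\g_{-1}$ and the elements $g_s$, $g'_s$, $\euler$ lie in $\g_0$, so $\gamma$ vanishes on all of them and $\L_{X^{e_r}}=L_{X^{e_r}}$, $\L_{X^{g_s}}=L_{X^{g_s}}$, $\L_{X^{g'_s}}=L_{X^{g'_s}}$, $\L_{X^{\euler}}=L_{X^{\euler}}$. Hence the last two summands in the decompositions of $C$ and of $\cc$ coincide, and subtracting leaves only the $\g_1\cdot\g_{-1}$ part:
\[
\cc-C=2\sum_{r}\bigl(\L_{X^{\epsilon^r}}\L_{X^{e_r}}-L_{X^{\epsilon^r}}L_{X^{e_r}}\bigr)=2\sum_{r}\bigl(\L_{X^{\epsilon^r}}-L_{X^{\epsilon^r}}\bigr)L_{X^{e_r}}=2\sum_{r}\gamma(\epsilon^r)L_{X^{e_r}},
\]
where the second equality uses once more $\L_{X^{e_r}}=L_{X^{e_r}}$. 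The right-hand side is exactly the operator $N$ of Definition~\ref{defN}, which establishes $\cc=C+N$.

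I do not expect a genuine obstacle here: the whole argument rests on the $\Z$-homogeneity of the chosen basis together with the vanishing of $\gamma$ on $\g_{-1}\oplus\g_0$. The only points that deserve a word of care are, first, the use of basis-independence of the Casimir operator so that one is entitled to use the same basis for the representations $L$ and $\L$; and second, the fact that everything is well defined degreewise on each $\S^k_\delta$ — but this is guaranteed by Proposition~\ref{gamma0}, which tells us that $\gamma(\epsilon^r)$ lowers the symmetric degree by one, so that $N$ maps $\S^k_\delta$ to $\S^{k-1}_\delta$ and all the sums above are finite on each homogeneous component. This is the direct super-analogue of the computation performed in \cite{BM} in the purely even case.
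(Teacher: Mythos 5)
Your argument is correct and is essentially the paper's own proof: the paper likewise applies Lemma \ref{lemma34} (formula (\ref{Casim0})) to both representations $L$ and $\L$, uses $\L_{X^h}=L_{X^h}$ on $\g_{-1}\oplus\g_0$, and identifies the leftover term $2\sum_r\gamma(\epsilon^r)L_{X^{e_r}}$ with $N$, as in \cite{BM}. Your write-up just makes the term-by-term comparison and the factorization $\L_{X^{\epsilon^r}}\L_{X^{e_r}}-L_{X^{\epsilon^r}}L_{X^{e_r}}=\gamma(\epsilon^r)L_{X^{e_r}}$ explicit, which is exactly what the paper leaves to the reader.
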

Now, we want to compute the Casimir operator $C$ on $\mathcal{S}^k_\delta$. We need
to introduce some notation from the classical representation theory of $\gl(p|q)$
or $\sla(p|q)$. For the most part, we follow the notation of \cite{KacLN}.
We denote by $D$ (resp. $H$) the subalgebra of $\gl(p|q)$ (resp. $\sla(p|q)$)
made of diagonal matrices. We denote by $(\eta_i\colon i\leqslant p+q)$ the standard basis
of $D^*$, i.e. if $\Delta=\mathrm{diag}(\Delta_1,\ldots,\Delta_{p+q})$ we set
\[\eta_i(\Delta)=\Delta_i.\] We use the same notation for the restrictions of
$\eta_i$ to $H$. When $p\not=q$, the restriction of the Killing form of $\sla(p|q)$ to $H$ is
non-degenerate. It induces a non-degenerate bilinear form on $H^*$. It can be computed
that this bilinear form is given by
\[(\eta_i,\eta_j)=\frac{(-1)^{\tilde{i}}}{2(p-q)}\delta_{ij}-\frac{1}{2(p-q)^2},\]
for all $1\leqslant i,j\leqslant p+q.$ 
We also consider the Borel subalgebra made of upper triangular matrices. The choice
of this subalgebra allows us to define positive roots. There are also even and odd
roots, corresponding to root spaces in the even or odd subspaces of $\sla(p|q)$.
We denote by $\rho_S$ the element of $H^*$ defined by $\rho_S=\rho_0-\rho_1$, where
$\rho_0$ (resp. $\rho_1$) is half the sum of the positive even (resp. odd) roots. In
this situation, we can show that $\rho_S$ is given by
\begin{equation}
\label{rhos}\rho_S=\frac{1}{2}\left(\sum_{r=1}^p(p-q+1-2r)\eta_r+\sum_{r=1}^q(p+q+1-2r)\eta_{p+r}\right).
\end{equation}
These data allow us to compute the Casimir operator $C$.
\begin{proposition}\label{Cassl}
 The Casimir operator defined by the action of the projective superalgebra
$\mathfrak{pgl}(p+1|q)\cong\sla(p+1|q)$ on $\S^k_\delta$ is a multiple of the identity. The associated
eigenvalue is given by 
\[\alpha(k,\delta)=\frac{p-q}{2}\delta^2-\frac{2k+p-q}{2}\delta+\frac{k(k+(p-q))}{(p-q+1)}.\]

\end{proposition}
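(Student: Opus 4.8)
The plan is to compute the Casimir operator $C$ on $\S^k_\delta$ directly using the decomposition of Lemma \ref{lemma34}, exploiting the fact that the action $L$ on $\S^k_\delta$ is very simple: on $\g_{-1}\oplus\g_0$ the maps $L_{X^h}$ and $\L_{X^h}$ coincide, and constant symbols form an invariant subspace on which the whole computation can be carried out. Since $C$ commutes with the $\sla(p+1|q)$-action and $\S^k_\delta$ should be irreducible (or at least $C$ is forced by Schur-type reasoning to act by a scalar on it — this is the part to justify, e.g. by noting that $C$ is a $\Vect$-module morphism of order zero with constant coefficients, hence determined by its value on a single constant symbol), it suffices to evaluate $C$ on one convenient constant tensor, for instance $u\otimes e_1^{\vee k}$ if $p\geqslant 1$, or more symmetrically on a highest-weight vector for the $\g_0=\gl(p|q)$-action.

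First I would use Formula (\ref{Casim0}) (or (\ref{Casim1}) when $p\neq q$). The term $2\sum_r \beta(\epsilon^r)\beta(e_r)$ applied to a constant symbol: here $\beta=L$, and $L_{X^{e_r}}$ acting on a constant symbol $T\in\S^k_\delta$ lowers nothing — wait, $e_r\in\g_{-1}$, so $X^{e_r}=-\partial_{y^r}$ is a constant vector field and $L_{X^{e_r}}T=0$ on constant symbols since $X^{e_r}(t)=0$ and the $J$-term vanishes for constant $X$. Hence the first term of (\ref{Casim0}) annihilates constant symbols, and $C$ on constant symbols reduces to the "$\g_0$ part'' $-\tfrac12 L_{X^\euler}+\sum_s(-1)^{\tilde g_s}L_{X^{g_s}}L_{X^{g'_s}}$. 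By Formula (\ref{eqXA}), on a constant symbol $1\otimes v$ with $v\in S^k_\delta=B^\delta\otimes S^k$ we have $L_{X^A}(1\otimes v)=1\otimes\rho_\delta(A)v$ where $\rho_\delta$ is the tensor-product representation of $\gl(p|q)$ on $B^\delta\otimes S^k$. So $C$ on constant symbols equals (up to the shift by $-\tfrac12\euler$, whose eigenvalue on $S^k_\delta$ is computed from $\str$) the $\gl(p|q)$-Casimir-type operator $\sum_s(-1)^{\tilde g_s}\rho_\delta(g_s)\rho_\delta(g'_s)$ acting on $B^\delta\otimes S^k$.

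The key computation is then the eigenvalue of this $\g_0$-quadratic element on $B^\delta\otimes S^k$. I would split $\gl(p|q)=\sla(p|q)\oplus\R\Id$ (when $p\neq q$; handle $p=q$ separately using (\ref{Casim0})), write the quadratic operator as the $\sla(p|q)$-Casimir plus the contribution of $\Id$, which is $\tfrac{1}{2(p-q)}\rho_\delta(\euler)^2$ as in (\ref{Casim1}). The $\sla(p|q)$-Casimir on an irreducible highest-weight module of highest weight $\Lambda$ acts by the standard Harish-Chandra formula $(\Lambda,\Lambda+2\rho_S)$ using the form on $H^*$ given above and the Weyl vector $\rho_S$ of (\ref{rhos}). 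Here $B^\delta\otimes S^k$ has highest weight $\Lambda_k=-\delta(\eta_1+\cdots+\eta_p-\eta_{p+1}-\cdots-\eta_{p+q})+k\eta_1$ (the $-\delta\str$ twist from $B^\delta$ plus the highest weight $k\eta_1$ of $S^k$ — one must check $e_1$ is even, i.e. $p\geqslant 1$, or use $\eta_{p+1}$ appropriately; the degenerate case $p=0$ is also to be checked). Plugging $\Lambda_k$ into $(\Lambda_k,\Lambda_k+2\rho_S)$ with the explicit bilinear form, adding the $\euler$-square term and the $-\tfrac12\euler$ term (whose eigenvalue on $B^\delta\otimes S^k$ is $\tfrac12(\delta(p-q)+k)$ since $\str$ acts by $\delta(q-p)-k$... signs to be tracked), and simplifying, should yield exactly $\alpha(k,\delta)$.

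The main obstacle I expect is bookkeeping: getting all the signs right in the supersymmetric bilinear form on $H^*$, the parity factors $(-1)^{\tilde g_s}$ in the Casimir, the sign of the $\str$-weight of $B^\delta$, and correctly identifying the highest weight of $S^k_\delta$ so that the Harish-Chandra formula applies — together with verifying that the formula for the eigenvalue obtained on the highest-weight constant symbol is indeed valid on all of $\S^k_\delta$ (irreducibility of $\S^k_\delta$ as an $\sla(p+1|q)$-module, or the weaker statement that $C$, being an order-zero $\Vect$-morphism with constant coefficients, is scalar). The algebra itself is a routine but delicate simplification from $(\Lambda_k,\Lambda_k+2\rho_S)+\tfrac{1}{2(p-q)}(\text{str weight})^2-\tfrac12(\text{str weight})$ down to the quadratic-in-$\delta$ expression $\tfrac{p-q}{2}\delta^2-\tfrac{2k+p-q}{2}\delta+\tfrac{k(k+p-q)}{p-q+1}$; I would double-check it against the purely even specialization $q=0$, where it must reduce to the known $\mathfrak{pgl}(p+1)$ eigenvalue from \cite{DO}.
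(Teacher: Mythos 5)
Your proposal follows essentially the same route as the paper's proof: decompose the Casimir via Lemma \ref{lemma34}, note that $C$ is determined by its constant-coefficient (fiber) part since it commutes with the constant vector fields, observe that the $\g_{-1}$--$\g_1$ summand contributes nothing there, and evaluate the remaining $\g_0$-part as the $\sla(p|q)$- (resp.\ $\gl(p|p)$-) Casimir on $S^k_\delta$ via the highest weight and $(\Lambda,\Lambda+2\rho_S)$, treating $p=q$ separately with (\ref{Casim0}). Only bookkeeping differs slightly (the $-\tfrac{1}{2}\beta(\euler)$ term contributes $-\tfrac{1}{2}(\delta(p-q)-k)$, and since $\str$ vanishes on the Cartan subalgebra $H$ of $\sla(p|q)$ the $\delta$-twist drops out of the $\sla(p|q)$-Casimir, so the $\delta$-dependence enters only through the $\euler$-terms, exactly as in the paper).
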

\begin{proof}
The proof goes as in the purely even case (see \cite{BM,MR1} for instance). Let us
recall the main developments and assume first that $p$ and $q$ are distinct.
The Casimir operator $C$ commutes with the action of
constant vector fields on $\S^k_\delta$. A direct computation shows that it has
therefore constant coefficients. Hence, in (\ref{Casim1}), we just collect the
terms with constant coefficients. The first summand does not contain such terms if
$\beta=L_X$. In $\beta(\euler)$, terms with constant coefficients are
$(\delta(p-q)-k)$ times the identity. Finally, in the last summand, they are equal to
$\sum_s(-1)^{\tilde{h_s}}\rho(h_s)\rho(h'_s)$, where $\rho$ is the action of
$\sla(p|q)$ on symmetric tensors defined by (\ref{rhotens}). This operator is
actually $\frac{p-q}{p-q+1}C_0$, where $C_0$ is the Casimir operator of the
representation $(S^k,\rho)$ of $\sla(p|q)$. Indeed, the bases $(h_s)$ and
$(h'_s)$ are dual with respect to the Killing form of $\sla(p+1|q)$ and the
restriction of this Killing form to the subalgebra $\sla(p|q)$ is equal to
$(p-q+1)/(p-q)$ times the Killing form of the subalgebra. Since the space
$S^k$ is an irreducible representation with highest weight $k\eta_1$, its Casimir operator is a multiple of the identity with eigenvalue 
\begin{equation}\label{value}(k\eta_1,k\eta_1+2\rho_S)=\frac{k(p-q-1)}{2(p-q)^2}(k+(p-q)),\end{equation}
and the result follows in this case. 

In the particular case $p=q$, we use the same developments, but we use (\ref{Casim0}) instead of (\ref{Casim1}) to obtain 
\begin{equation}\label{cpp}
C=\frac{1}{2}\rho(\Id)+\sum_s(-1)^{\tilde{g_s}}\rho(g_s)\rho(g'_s).
\end{equation}
The first summand is clearly equal to $\frac{k}{2}\Id$, while the second is the Casimir operator of $\gl(p|p)$ associated with the invariant bilinear form $K_0$ induced on $\g_0=\gl(p|p)$ by the Killing form of $\sla(p+1|p)$. This form is given by
\[K_0\colon\gl(p|p)\times\gl(p|p)\to\R\colon(A,B)\mapsto 2(\str(AB)-\str(A)\str(B)).\]
This Casimir operator can be computed in a standard way: we notice that the space of weighted symmetric tensors $S^k_\delta$ over $\R^{p|p}$ is a representation of $\gl(p|p)$ with highest weight 
\[\Lambda=k\eta_1-\delta \str=k\eta_1-\delta\left(\sum_{r=1}^p(\eta_r-\eta_{r+p})\right).\]
The restriction of $K_0$ to the Cartan subalgebra $D$ of diagonal matrices is non-degenerate and allows one to define a bilinear form $(\cdot,\cdot)$ on $D^*$. It is then easy to compute the relations
\[(\Lambda,\Lambda)=k(k-\delta)\quad\mbox{and}\quad (\Lambda,2\rho_S)=-\frac{k}{2},\]
where $\rho_S$ is given by (\ref{rhos}) (with $q=p$). Therefore, the last summand in (\ref{cpp}) reads $(k(k-\delta)-\frac{k}{2}) \Id$ and the result follows.
\end{proof}
\section{The explicit construction}
\subsection{Critical values}
Let us start this section with the definition of critical values of the parameter $\delta$. 
\begin{definition}
A value of $\delta$ is \emph{critical} if there exist $k,l\in\N$ with $l<k$ such
that $\alpha(k,\delta)=\alpha(l,\delta)$. 
\end{definition}
Using Proposition \ref{Cassl}, we can easily describe the set of critical values.
\begin{proposition}\label{gencrit}
 The set of critical values for the $\sla(p+1|q)$-equivariant quantization over $\R^{p|q}$ is given by
\[\mathfrak{C}=\cup_{k=1}^\infty\mathfrak{C}_k,\quad\mbox{where}\quad\mathfrak{C}_k=\left\{\frac{2k-l+p-q}{p-q+1}\colon l=1,\ldots, k\right\}.\]
\end{proposition}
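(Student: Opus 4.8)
The plan is to use Proposition \ref{Cassl}, which gives the eigenvalue $\alpha(k,\delta)$ of the Casimir operator on $\S^k_\delta$ as an explicit polynomial in $k$ and $\delta$, and to simply solve the equation $\alpha(k,\delta)=\alpha(l,\delta)$ for $\delta$ in terms of $k$ and $l$ with $l<k$. Write $n=p-q$ for brevity. From the formula
\[\alpha(k,\delta)=\frac{n}{2}\delta^2-\frac{2k+n}{2}\delta+\frac{k(k+n)}{n+1},\]
the quadratic terms in $\delta$ are identical for $\alpha(k,\delta)$ and $\alpha(l,\delta)$, so the difference $\alpha(k,\delta)-\alpha(l,\delta)$ is \emph{linear} in $\delta$. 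This is the key simplification: the equation $\alpha(k,\delta)=\alpha(l,\delta)$ has, for each fixed pair $l<k$, exactly one solution in $\delta$.

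Concretely, I would compute
\[\alpha(k,\delta)-\alpha(l,\delta)=-\frac{2k+n}{2}\delta+\frac{2l+n}{2}\delta+\frac{k(k+n)-l(l+n)}{n+1}=-(k-l)\delta+\frac{(k-l)(k+l+n)}{n+1}.\]
Since $k>l$, we may divide by $k-l$, and setting the result to zero gives
\[\delta=\frac{k+l+n}{n+1}=\frac{k+l+p-q}{p-q+1}.\]
Now I reparametrize: the critical values coming from a pair $(l,k)$ with $1\leqslant l<k$ are exactly the numbers $\frac{k+l+p-q}{p-q+1}$. Writing $m=k-l$, so $1\leqslant m\leqslant k-1$ — wait, better to match the paper's indexing: with the pair written as $(l,k)$ and setting the "gap label" to be $l'=k-l\in\{1,\ldots,k-1\}$ one gets $k+l=2k-l'$, hence $\delta=\frac{2k-l'+p-q}{p-q+1}$. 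Thus for each $k\geqslant 2$ the critical values arising from pairs with larger index $k$ are precisely $\left\{\frac{2k-l+p-q}{p-q+1}: l=1,\ldots,k-1\right\}$; allowing also $l=k$ (which corresponds to the degenerate pair $l'=0$, i.e. $\delta$ a value one may harmlessly include) gives the set $\mathfrak{C}_k$ as stated, and taking the union over all $k$ yields $\mathfrak{C}=\cup_{k=1}^\infty\mathfrak{C}_k$.

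I expect the only real point requiring care — the "main obstacle," though it is minor — is bookkeeping the reindexing so that the union $\cup_k\mathfrak{C}_k$ with $\mathfrak{C}_k=\{\frac{2k-l+p-q}{p-q+1}:l=1,\ldots,k\}$ captures exactly the set of $\delta$ for which \emph{some} pair $l<k$ has $\alpha(k,\delta)=\alpha(l,\delta)$, with no spurious or missing values; one checks that the inclusion of $l=k$ in $\mathfrak{C}_k$ is absorbed by $\mathfrak{C}_{k'}$ for suitable $k'$ (or is simply a harmless convention), and that every pair is accounted for. One should also note the edge case $p=q$ (i.e. $n=0$), where the formula for $\alpha$ reduces to $\alpha(k,\delta)=-k\delta+k(k-\delta)$... more precisely to the $\gl(p|p)$ value computed in Proposition \ref{Cassl}, and one verifies the same linear-in-$\delta$ phenomenon persists so that the description of $\mathfrak{C}_k$ still applies with $p-q=0$.
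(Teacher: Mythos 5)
Your core computation is the intended one (the paper itself only says ``Using Proposition \ref{Cassl}, we can easily describe the set of critical values'') and it is correct: the quadratic terms in $\delta$ cancel, so
\[
\alpha(k,\delta)-\alpha(l,\delta)=(k-l)\left(\frac{k+l+p-q}{p-q+1}-\delta\right),
\]
and each pair $l<k$ contributes exactly one value $\delta=\frac{k+l+p-q}{p-q+1}$; since the eigenvalue formula of Proposition \ref{Cassl} is uniform in $p-q$ (only $p-q+1\neq 0$ is used), no separate treatment of $p=q$ is really needed (and note your parenthetical expression $-k\delta+k(k-\delta)$ for that case is garbled: the value is $k(k-\delta)$, which is what the general formula gives at $p-q=0$).

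The genuine gap is exactly the bookkeeping point you flagged, and your resolution of it is wrong. You restrict to pairs with $1\leqslant l<k$ and then dismiss the index $l=k$ in $\mathfrak{C}_k$ as ``degenerate'', ``harmless'', or ``absorbed by $\mathfrak{C}_{k'}$''. In the paper's parametrization $l\mapsto\frac{2k-l+p-q}{p-q+1}$, the index $l=k$ corresponds to the perfectly legitimate pair $(k,0)$, i.e.\ to $\alpha(k,\delta)=\alpha(0,\delta)$: here $0\in\N$, degree-zero symbols exist, the recursion (\ref{flatP}) in the proof of Theorem \ref{flatex} descends all the way to $S_0$ and requires $\alpha(k,\delta)\neq\alpha(0,\delta)$, and the factor $j=k$ in the denominator of $C_{k,k}$ in Theorem \ref{Expl} vanishes precisely at this value of $\delta$. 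Moreover these values are \emph{not} absorbed by other $\mathfrak{C}_{k'}$ with indices $l\leqslant k'-1$: for instance $\mathfrak{C}_1=\left\{\frac{1+p-q}{p-q+1}\right\}$ arises only from the pair $(1,0)$, and $\frac{2+p-q}{p-q+1}\in\mathfrak{C}_2$ only from $(2,0)$. So with your restriction you either miss genuine critical values or include them without proof that they are critical. The fix is simply to let the smaller index run over $0\leqslant l\leqslant k-1$; then $k+l$ runs over $k,\ldots,2k-1$, and the substitution $l\mapsto k-l$ reproduces $\mathfrak{C}_k$ with $l=1,\ldots,k$ exactly, with no convention or absorption argument needed.
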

\begin{remark}
The critical values here correspond to the ones in the classical situation (see \cite[p.~63]{DO}) up to replacement of the dimension $n$ by the superdimension $p-q$. However, since the superdimension can be negative, there exist some values of $(p|q)$ such that $\delta=0$ is a critical value.
\end{remark}
\subsection{The construction}
We can now state the main results.
\begin{theorem}\label{flatex}
If $\delta$ is not critical, then there exists a unique projectively equivariant
quantization from $\mathcal{S}_{\delta}$ to $\mathcal{D}_{\lambda,\mu}$.
\end{theorem}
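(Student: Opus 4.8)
The plan is to follow the classical Casimir-operator method of \cite{DO,BM}, carried over to the symbol space $\S_\delta$ by means of the affine quantization map $Q_{\mathrm{Aff}}$. Since $Q_{\mathrm{Aff}}$ is an isomorphism intertwining the $\Vect(\R^{p|q})$-action $\L$ on $\S_\delta$ with the standard action on $\D_{\lambda,\mu}$, a projectively equivariant quantization $Q\colon\S_\delta\to\D_{\lambda,\mu}$ is the same datum as a map $B=Q_{\mathrm{Aff}}^{-1}\circ Q$ on $\S_\delta$ which is $\sla(p+1|q)$-equivariant from $(\S_\delta,L)$ to $(\S_\delta,\L)$ and which, because of the normalization condition, restricts on each $\S^k_\delta$ to a map whose top-degree component is the identity. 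I would produce such a $B$ by comparing the two Casimir operators $C$ (of the representation $L$) and $\cc$ (of the representation $\L$) on $\S_\delta$, which are available here since $q\neq p+1$ makes the Killing form of $\sla(p+1|q)$ non-degenerate.

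The essential inputs are already at hand: by Proposition \ref{Cassl} the operator $C$ acts on $\S^k_\delta$ as the scalar $\alpha(k,\delta)$, and by Proposition \ref{relcas} together with Definition \ref{defN} one has $\cc=C+N$, where $N$ strictly lowers the tensor degree ($N\colon\S^k_\delta\to\S^{k-1}_\delta$). Hence, on the finite-order piece $\S^{\leqslant k}_\delta:=\bigoplus_{l\leqslant k}\S^l_\delta$ — which is stable under $\L$ because the corresponding action on $\D_{\lambda,\mu}$ preserves the order filtration — the operator $\cc$ is, with respect to the degree decomposition, block lower-triangular with scalar diagonal blocks $\alpha(l,\delta)\,\id_{\S^l_\delta}$. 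As $\delta$ is not critical, the scalars $\alpha(0,\delta),\dots,\alpha(k,\delta)$ are pairwise distinct; an elementary computation then shows that $\prod_{l\leqslant k}\bigl(\cc-\alpha(l,\delta)\,\id\bigr)$ vanishes on $\S^{\leqslant k}_\delta$, so $\cc$ is diagonalizable there, its $\alpha(l,\delta)$-eigenspace $E_l$ being an $\L$-submodule of $\S_\delta$. Solving $\cc v=\alpha(l,\delta)v$ degree by degree, one finds that each such $v$ is uniquely determined by its degree-$l$ component $v_l$ through the recursion $v_m=(\alpha(l,\delta)-\alpha(m,\delta))^{-1}N v_{m+1}$ for $m<l$; in particular $E_l\subseteq\S^{\leqslant l}_\delta$, and the canonical projection $P_l:=\sigma_l\circ Q_{\mathrm{Aff}}$ onto the top component restricts to an isomorphism $P_l\colon E_l\to\S^l_\delta$.

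I would then define the quantization on each $\S^l_\delta$ by $Q:=Q_{\mathrm{Aff}}\circ(P_l|_{E_l})^{-1}$. Since the principal symbol operator $\sigma_l$ commutes with the action of vector fields, and $Q_{\mathrm{Aff}}$ intertwines $\L$ with the action on $\D_{\lambda,\mu}$, the map $P_l|_{E_l}\colon(E_l,\L)\to(\S^l_\delta,L)$ is a $\sla(p+1|q)$-equivariant isomorphism, so $Q$ is $\sla(p+1|q)$-equivariant; the normalization $\sigma_l(Q(T))=T$ holds by construction, and $Q$ is bijective because the $E_l$ decompose $\S_\delta$ and $Q_{\mathrm{Aff}}$ is a bijection. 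For uniqueness, if $Q'$ is any projectively equivariant quantization, then $B'=Q_{\mathrm{Aff}}^{-1}\circ Q'$ is equivariant from $(\S_\delta,L)$ to $(\S_\delta,\L)$, hence intertwines $C$ with $\cc$; as $C=\alpha(l,\delta)\,\id$ on $\S^l_\delta$, this forces $B'(\S^l_\delta)\subseteq\ker(\cc-\alpha(l,\delta)\,\id)=E_l$, while the normalization gives $P_l\circ B'|_{\S^l_\delta}=\id$, so $B'|_{\S^l_\delta}=(P_l|_{E_l})^{-1}$ and $Q'=Q$. The genuine work has already been done in Propositions \ref{Cassl} and \ref{relcas} (the scalar form of $C$ and the relation $\cc=C+N$); what remains is essentially formal, the only point requiring care being to carry out the eigenspace analysis order by order, since the ambient modules are infinite-dimensional over $\F$.
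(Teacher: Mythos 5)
Your proposal is correct and follows essentially the same route as the paper: transfer the problem to $(\S_\delta,L)$ versus $(\S_\delta,\L)$ via $Q_{\mathrm{Aff}}$, use $\cc=C+N$ with $C=\alpha(k,\delta)\,\mathrm{Id}$ on $\S^k_\delta$, and solve the resulting triangular eigenvector problem degree by degree, non-criticality guaranteeing unique solvability. Your packaging via the eigenspaces $E_l$ and the top-symbol projections $P_l$ (and your explicit uniqueness argument through intertwining of the Casimirs) is just a slightly more detailed rewording of the paper's ``unique eigenvector $\hat S$ with prescribed leading term'' construction.
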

\begin{proof}
The proof is as in \cite{BM} and \cite{DLO}. We give here the main ideas for
the sake of completeness.

First, remark that for every $S\in\mathcal{S}_{\delta}^{k}$, there exists a unique
eigenvector $\hat{S}$ of $\cc$ with eigenvalue $\alpha(k,\delta)$ such that 
\[\left\{\begin{array}{l}\hat{S}= S_k + S_{k-1}+\cdots+S_0,\quad S_k=S\\
S_l\in\mathcal{S}_{\delta}^{l}\quad\mbox{for all } l\leqslant
k-1.\end{array}\right.\]
Indeed, these conditions write 
\begin{equation}\label{flatP}\left\{\begin{array}{l}
C(S)=\alpha(k,\delta)S\\
(C-\alpha(k,\delta)\mbox{Id})S_{k-l}=- N(S_{k-l+1})\quad\mbox{for all }
l\in\{1,\ldots,k\}\\
S_{k-l}\in\mathcal{S}_{\delta}^{k-l}.
\end{array}\right.\end{equation}
This system of equations has a unique solution. Indeed, as $\delta$ is
not critical, the differences $\alpha(k,\delta)-\alpha(l,\delta)$ are different from
$0$.

Now, define the quantization $Q$ (remark that this is the only way to proceed) by
\[Q\vert_{\mathcal{S}_{\delta}^{k}}(S)=\hat{S}.\]
It is clearly a bijection and it also fulfills 
\[Q\circ L_{X^h} = \L_{X^h}\circ Q\quad\mbox{for all } h\in\,\sla(p+1|q).\]
Indeed, for all $S\in\mathcal{S}_{\delta}^{k}$, the tensors $Q(L_{X^h}S)$ and $ \L_{X^h}(Q(S))$
share the following properties:
\begin{itemize}
\item they are eigenvectors of $\cc$ of eigenvalue $\alpha(k,\delta)$ because, on
  the one hand, $\cc$ commutes with $\L_{X^h}$ for all $h$ and, on the other
  hand, $C$ commutes with $L_{X^{h}}$ for all $h$.
\item their term of degree $k$ is exactly $L_{X^h}S$.
\end{itemize}
The first part of the proof shows that they have to coincide.
\end{proof}
Let us now introduce a new operator on symmetric tensor fields.
\begin{definition}Using the bases of Definition \ref{canbases} we define the divergence operator on symbols by~:
\begin{equation}\label{divergence}\dive \colon \S^k_{\delta} \to \S^{k-1}_{\delta} \colon S\mapsto \sum_{j=1}^{p+q}
(-1)^{\tilde{y^j}}i(\varepsilon^{j})\partial_{y^j}S.\end{equation}
\end{definition}
\begin{remark}\label{rem:div}
It is easy to check that the restriction of this divergence operator to vector
fields, that is, to $\S^1_0$, coincides with the divergence of
Definition~\ref{Defdiv}. It is also easy to check that this divergence operator is not invariant
 with respect to divergence-free vector fields unless $k\leqslant 1$.
\end{remark}
We can now give the explicit formula for the $\sla(p+1|q)$-equivariant quantization.
\begin{theorem}\label{Expl} If $\delta$ is not critical, then the map $Q:
\mathcal{S}_{\delta}\to
\mathcal{D}_{\lambda,\mu}$ defined by
\begin{equation}\label{formula}Q(S)(f) = \sum_{r=0}^k
C_{k,r}Q_{\mathrm{Aff}}(\dive^{r}S)(f)\quad\mbox{for all }
S\in\mathcal{S}_{\delta}^k\end{equation}
is the unique $\sla(p+1|q)$-equivariant quantization over $\R^{p|q}$ if
\[C_{k,r} =\frac{\prod_{j=1}^r((p-q+1)\lambda + k-j)}{r!\,\prod_{j=1}^r
(p-q+2k-j -(p-q+1)\delta)}\mbox{ for all } r\geqslant 1,\quad C_{k,0}=1.\]
\end{theorem}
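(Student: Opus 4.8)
The plan is to identify the explicit operator $Q$ in Formula (\ref{formula}) with the unique quantization constructed in the proof of Theorem \ref{flatex}; by that theorem it suffices to show that for every $S\in\mathcal S^k_\delta$ the symbol $\widehat S := \sum_{r=0}^k C_{k,r}\,\dive^r S$ is an eigenvector of $\cc$ with eigenvalue $\alpha(k,\delta)$ and has top-degree term equal to $S$ (the normalization $C_{k,0}=1$ takes care of the latter). So the real content is the eigenvalue equation, which via Proposition \ref{relcas} reads $(C+N)\widehat S=\alpha(k,\delta)\widehat S$. Since $\dive^r S\in\mathcal S^{k-r}_\delta$ and $C$ acts on $\mathcal S^{k-r}_\delta$ as the scalar $\alpha(k-r,\delta)$ (Proposition \ref{Cassl}), and since $N$ lowers degree by one, expanding in homogeneous components turns the eigenvalue equation into the recursion
\begin{equation}\label{eq:recn}
(\alpha(k,\delta)-\alpha(k-r,\delta))\,C_{k,r}\,\dive^r S \;=\; C_{k,r-1}\,N(\dive^{\,r-1}S)\qquad (1\le r\le k).
\end{equation}
Thus the proof reduces to two independent tasks: (a) show that $N$ acts on the relevant subspaces as a multiple of $\dive$, say $N|_{\mathrm{im}\,\dive^{r-1}}=\mu_{k,r}\,\dive$ for an explicit scalar $\mu_{k,r}$; and (b) check the purely numerical identity $(\alpha(k,\delta)-\alpha(k-r,\delta))\,C_{k,r}=\mu_{k,r}\,C_{k,r-1}$, which, given the closed form of $C_{k,r}$ as a ratio of two products, should telescope.

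For (a) I would combine the coordinate description of $N$ from Definition \ref{defN}, namely $N=2\sum_i\gamma(\epsilon^i)L_{X^{e_i}}$, with Proposition \ref{gamma1}, which on $\mathcal S^m_\delta$ gives $\gamma(\epsilon^i)=-\frac{(-1)^{\tilde i}}{2(p-q+1)}\,(\lambda(p-q+1)+m-1)\,i(\varepsilon^i)$ (recalling $\epsilon^i=\frac{(-1)^{\tilde i}}{2(p-q+1)}\varepsilon^i$ from Definition \ref{canbases}). Since $L_{X^{e_i}}$ on constant-coefficient-type terms is essentially $-\partial_{y^i}$ up to the density twist (see Formula (\ref{real}) and Definition \ref{defsymb}), the operator $\sum_i(-1)^{\tilde i}i(\varepsilon^i)L_{X^{e_i}}$ is, up to sign, exactly the divergence operator of Formula (\ref{divergence}) — here Remark \ref{rem:div} is the guiding check. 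Being careful with the sign $(-1)^{\tilde y^j}$ appearing in (\ref{divergence}) and with the degree $m$ at which $\gamma$ is evaluated (when $N$ hits $\dive^{r-1}S\in\mathcal S^{k-r+1}_\delta$ one has $m=k-r+1$, so the $i$-th interior product lands us at degree $k-r$ where $\gamma(\epsilon^i)$ must be read off), I expect $\mu_{k,r}=-(\lambda(p-q+1)+k-r)\cdot(\text{combinatorial factor})$, the combinatorial factor accounting for how $\sum_i i(\varepsilon^i)\partial_{y^i}$ relates to $\dive$ on a degree-$(k-r+1)$ tensor. This bookkeeping — getting every sign and every shift of the degree parameter right — is the step I expect to be the main obstacle; conceptually everything is forced, but the super sign conventions make it delicate.

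For (b), once $\mu_{k,r}$ is in hand, one substitutes the formula for $C_{k,r}$ into $(\alpha(k,\delta)-\alpha(k-r,\delta))C_{k,r}=\mu_{k,r}C_{k,r-1}$. Writing $C_{k,r}/C_{k,r-1}=\dfrac{(p-q+1)\lambda+k-r}{r\,(p-q+2k-r-(p-q+1)\delta)}$ directly from the stated product formula, the identity becomes
\[
\alpha(k,\delta)-\alpha(k-r,\delta)\;=\;\mu_{k,r}\cdot\frac{r\,(p-q+2k-r-(p-q+1)\delta)}{(p-q+1)\lambda+k-r},
\]
and since the factor $(p-q+1)\lambda+k-r$ in the denominator cancels the corresponding factor of $\mu_{k,r}$, one is left with checking that $\alpha(k,\delta)-\alpha(k-r,\delta)$ equals a simple quadratic-in-$\delta$ expression times $r$. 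Using the explicit $\alpha(k,\delta)=\frac{p-q}{2}\delta^2-\frac{2k+p-q}{2}\delta+\frac{k(k+p-q)}{p-q+1}$, the $\delta^2$-terms cancel and a short computation gives $\alpha(k,\delta)-\alpha(k-r,\delta)=r\delta-\ldots$ which should match exactly. I would then close the argument by invoking Theorem \ref{flatex}: since $\widehat S$ is an $\cc$-eigenvector of the correct eigenvalue with leading term $S$, it coincides with $Q(S)$ from that theorem, hence $Q$ as defined in (\ref{formula}) is the unique $\sla(p+1|q)$-equivariant quantization, and the translation of $\cc$-eigenvectors back to differential operators via $Q_{\mathrm{Aff}}$ gives precisely the stated formula $Q(S)(f)=\sum_{r=0}^k C_{k,r}Q_{\mathrm{Aff}}(\dive^r S)(f)$. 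One should also note in passing that non-criticality of $\delta$ guarantees all the denominators $p-q+2k-j-(p-q+1)\delta$ in $C_{k,r}$ are nonzero, so the formula is well defined.
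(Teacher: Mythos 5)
Your proposal is correct and takes essentially the same route as the paper: the published proof likewise plugs $N=2\sum_i\gamma(\epsilon^i)L_{X^{e_i}}$ into the recursion (\ref{flatP}) from Theorem \ref{flatex} and reads off the coefficients via Definition \ref{canbases} and Proposition \ref{gamma1}. The bookkeeping you flag as the main obstacle resolves cleanly: on $\mathcal{S}^m_\delta$ one finds $N=\frac{(p-q+1)\lambda+m-1}{p-q+1}\,\dive$ (the minus signs from $\gamma$ and from $X^{e_i}=-\partial_{y^i}$ cancel), so $\mu_{k,r}=\frac{(p-q+1)\lambda+k-r}{p-q+1}$, and combined with $\alpha(k,\delta)-\alpha(k-r,\delta)=\frac{r\,(p-q+2k-r-(p-q+1)\delta)}{p-q+1}$ the recursion gives exactly the stated $C_{k,r}$.
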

\begin{proof}
Using the definition of $N$, Equation (\ref{flatP}) is equivalent to 
\[S_{r}=\frac {2\sum_{i=1}^{p+q}
\gamma(\epsilon^i)L_{X^{e_i}}S_{r+1}}{\alpha(k,\delta)-\alpha(r,\delta)},\quad
0\leqslant r\leqslant k-1.\]
We conclude
using Definition \ref{canbases} and Proposition \ref{gamma1}.
\end{proof}
\begin{remark}\label{rmk:5}\hspace{\fill}
\begin{itemize}
\item[a)] Formula (\ref{formula}) coincides with Formula (2.4) given in \cite{DO}, if we replace the dimension $n$ by the superdimension $p-q$ and the divergence operator $D$ by the divergence operator $\dive$ defined above. In particular, Formula (\ref{formula}) can be written under the same form as Equation (3.2) in \cite{DO}, using hypergeometric functions.
\item[b)] It was already pointed out in \cite{Lecras} and \cite{DO} that for critical values of $\delta$ the quantization does not exist for generic values of $\lambda$, but there are some specific values of this parameter such that the equivariant quantization exists. 
 \end{itemize}
\end{remark}
\section{$\mathfrak{psl}(p+1|p+1)$- and $\mathfrak{pgl}(p+1|p+1)$-equivariant quantizations}
\subsection{The setting}
In the developments above, we analyzed the $\mathfrak{pgl}(p+1|q)$-equivariant quantizations
on $\R^{p|q}$ when $q\not=p+1$. In this section, we analyze the remaining special 
case that presents phenomena that have no analogues in the classical theory of projectively equivariant quantization. 

Indeed, the projective superalgebra $\mathfrak{pgl}(p+1|p+1)$ is not isomorphic to $\sla(p+1|p+1)$. The formula $\dive(f X)=f\dive X+X(f)$ that holds for every function $f$ and every even vector field $X$, shows that the quadratic vector fields of this superalgebra are divergence-free.
It is not endowed with a non-degenerate bilinear symmetric invariant form and moreover it is not simple,
 because it has a codimension one ideal $\mathfrak{psl}(p+1|p+1)$. We now analyze the existence of the $\mathfrak{psl}(p+1|p+1)$-equivariant quantization. Since this algebra is made of divergence-free fields, the modules of densities are all isomorphic to the space of functions and the existence of the quantization does not depend on the parameters $\lambda$ and $\delta$. 
We show that the method above can be applied to the ideal $\mathfrak{psl}(p+1|p+1)$ to define a one-parameter family of $\mathfrak{psl}(p+1|p+1)$-equivariant quantizations over $\R^{p|p+1}$ and that these quantizations are in fact $\mathfrak{pgl}(p+1|p+1)$-equivariant.
 
It was remarked by I. Kaplansky \cite{Kaplan} that, even though the Killing form of $\mathfrak{psl}(p+1|p+1)$ vanishes, this algebra is endowed with a non-degenerate invariant supersymmetric even form, namely
\[\mathcal{K}([A],[B])=\str(AB).\]
The isomorphism $j$ (see (\ref{eq:j})) identifies the subalgebra $\mathfrak{psl}(p+1|p+1)$ to ${\g_{-1}\oplus\g_{0}\oplus\g_{1}}$, where $\g_{-1}=\R^{p|p+1}$, $\g_{0}=\sla(p|p+1)$ and $\g_{1}=(\R^{p|p+1})^*$. 
\subsection{Construction of the quantization}
We now modify the arguments of 
Section \ref{cons} to get the existence of the quantization. 
\begin{theorem}\label{thmcaspsl}
The Casimir operator $C$ of $\mathfrak{psl}(p+1|p+1)$ acting on $\S^k_\delta$ is equal to $2k(k-1)$ times the identity.
\end{theorem}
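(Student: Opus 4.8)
The plan is to repeat, \emph{mutatis mutandis}, the computation leading to Lemma~\ref{lemma34} and Proposition~\ref{Cassl}, with the Killing form replaced by $\mathcal{K}$ and using the identification $\mathfrak{psl}(p+1|p+1)=\g_{-1}\oplus\g_{0}\oplus\g_{1}$ with $\g_{0}=\sla(p|p+1)$. First I would choose a basis adapted to the $\Z$-grading: the columns $e_{r}$ of $\g_{-1}=\R^{p|p+1}$, a basis $(g_{s})$ of $\g_{0}$, and the rows $\varepsilon^{t}$ of $\g_{1}=(\R^{p|p+1})^{*}$. A direct computation with $\mathcal{K}([A],[B])=\str(AB)$ shows that $\mathcal{K}$ puts $\g_{-1}$ and $\g_{1}$ in duality, with $\mathcal{K}(e_{r},\varepsilon^{s})=(-1)^{\tilde{r}}\delta_{rs}$, and that its restriction to $\g_{0}$ is the trace form $\str(AB)$, which is non-degenerate on $\sla(p|p+1)$ precisely because $p\neq p+1$; one then obtains a $\mathcal{K}$-dual basis of the same shape as in Proposition~\ref{particbases} (with $\varepsilon^{r}$ in place of $\epsilon^{r}$). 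Reordering the terms of $C$ as in Lemma~\ref{lemma34} gives
\[C=2\sum_{r}(-1)^{\tilde{r}}L_{X^{\varepsilon^{r}}}L_{X^{e_{r}}}+L_{X^{w}}+\sum_{s}(-1)^{\tilde{g_{s}}}L_{X^{g_{s}}}L_{X^{g_{s}'}},\qquad w=\sum_{r}[e_{r},\varepsilon^{r}],\]
where $(g_{s}')$ is the $\mathcal{K}$-dual basis of $(g_{s})$. The one genuinely new point is that the ``Euler term'' of Lemma~\ref{lemma34} disappears: computing the bracket as in the proof of (\ref{crochet}), the $\g_{0}$-component of $[e_{r},\varepsilon^{r}]$ is $e_{r}^{r}+(-1)^{\tilde{r}}\Id$, whence $w=(1+p-q)\Id$, which vanishes exactly because $q=p+1$ — in accordance with the fact that $\euler\notin\mathfrak{psl}(p+1|p+1)$.

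Next, as in the proof of Proposition~\ref{Cassl}, $C$ commutes with all the $L_{X^{e_{i}}}=-\partial_{y^{i}}$, hence has constant coefficients, and is therefore determined by its action on constant symbols $1\otimes v$, $v\in S^{k}_{\delta}$. Since $L_{X^{e_{r}}}(1\otimes v)=0$, the first sum contributes nothing, and (\ref{eqXA}) yields $C(1\otimes v)=1\otimes\bigl(\sum_{s}(-1)^{\tilde{g_{s}}}\rho(g_{s})\rho(g_{s}')\bigr)v=1\otimes C_{0}v$, where $C_{0}$ is the Casimir operator of the representation $(S^{k}_{\delta},\rho)$ of $\sla(p|p+1)$ associated with the form $\str(AB)$. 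Because $\rho$ acts trivially on $B^{\delta}$ (as $\str$ vanishes on $\sla(p|p+1)$), $C_{0}$ is really the Casimir of $(S^{k},\rho)$, which is why the statement does not depend on $\lambda$ and $\delta$. As $(S^{k},\rho)$ is generated by the highest weight vector $e_{1}\vee\cdots\vee e_{1}$ ($k$ factors) of weight $k\eta_{1}$, the central element $C_{0}$ acts by a scalar; to evaluate it I would use that the Killing form of $\sla(p|p+1)$ equals $-2\,\str(AB)$ (cf.\ (\ref{Killing})), so that $C_{0}=-2\,(k\eta_{1},k\eta_{1}+2\rho_{S})$ with $(\cdot,\cdot)$ induced on $H^{*}$ by the Killing form and $\rho_{S}$ as in (\ref{rhos}). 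Substituting $q=p+1$ in the value $\frac{k(p-q-1)}{2(p-q)^{2}}\bigl(k+(p-q)\bigr)$ computed in the proof of Proposition~\ref{Cassl} gives $-k(k-1)$, whence $C_{0}=2k(k-1)$ and $C=2k(k-1)\,\Id$ on $\S^{k}_{\delta}$.

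The step I expect to require the most care is the bookkeeping in the first paragraph: verifying that $\mathcal{K}$ restricts to the trace form on $\g_{0}$ and is non-degenerate there, and — this is the structural difference with Lemma~\ref{lemma34} — that the reordering term $w$ vanishes, which is the only place where the identity $q=p+1$ is actually used. Everything else is a straightforward transcription of the arguments of Section~\ref{cons}.
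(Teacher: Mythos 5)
Your proposal is correct and follows essentially the same route as the paper's own proof: the same $\mathcal{K}$-adapted basis $(e_r,g_s,\varepsilon^t)$, the same adaptation of Lemma \ref{lemma34} with the key vanishing of $\sum_r[e_r,\varepsilon^r]$ (your $w=(1+p-q)\Id$, which is the analogue of (\ref{crochet}) at $q=p+1$), the same collection of constant-coefficient terms, and the same identification of $\mathcal{K}|_{\sla(p|p+1)}$ as $-\frac{1}{2}$ times the Killing form followed by (\ref{value}) with $q=p+1$, giving $-2\cdot(-k(k-1))=2k(k-1)$. The only differences are expository: you spell out the vanishing of the Euler term and the $\lambda,\delta$-independence, which the paper states more tersely.
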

\begin{proof}
We choose a suitable basis as above by considering $(e_{i}, g_s, (-1)^{\tilde{i}}\varepsilon^{i})$, where $e_i$ and $\varepsilon^{i}$ are as in Definition \ref{canbases} and the elements $g_s$ form a basis in $\sla(p|p+1)$. It is then easy to see that the $\mathcal{K}$-dual basis is of the form $((-1)^{\tilde{i}}\varepsilon^{i},g_s',(-1)^{\tilde{i}}e_{i})$, where $g_s'$ is some basis in $\sla(p|p+1)$. An easy computation shows that, in $\mathfrak{psl}(p+1|p+1)$, we have 
\[\sum_{i}[e_{i},\varepsilon^{i}]=0.\]
Therefore, we can then adapt Lemma \ref{lemma34}: for any representation $(V,\beta)$ of $\mathfrak{psl}(p+1|p+1)$, the Casimir operator of $(V,\beta)$ is given by
\begin{equation}
2\sum_{r=1}^{2p+1}\beta((-1)^{\tilde{r}}\varepsilon^r)\beta(e_r)+\sum_s(-1)^{\tilde{g_s}}\beta(g_s)\beta(g'_s).
\end{equation}
When $\beta=L_{X}$, we collect again the terms with constant coefficients. The first summand does not contain such terms and in the second summand, they are equal to 
\begin{equation}\label{coucou}\sum_s(-1)^{\tilde{g_s}}\rho(g_s)\rho(g'_s),\end{equation}
where $\rho$ denotes the action of $\sla(p|p+1)$ on supersymmetric tensors (see Formula (\ref{rhotens})). We remark that the Killing form of $\sla(p|p+1)$ is equal to $-2$ times the restriction of the form $\mathcal{K}$ to $\sla(p|p+1)$, so that (\ref{coucou}) is equal to $-2$ times the Casimir operator of $\sla(p|p+1)$. The result follows using (\ref{value}), with $q=p+1$.
\end{proof}
We deduce from the previous result that the situation is critical for the $\mathfrak{psl}(p+1|p+1)$-equivariant quantization, since the eigenvalues of the Casimir operator on $\S_\delta^1$ and on $\S_\delta^0$ coincide. However, it is possible to build the quantization because $\gamma(h)$ vanishes on $\S_\delta^1$ for every $h\in \g_1$ (see Remark \ref{rmk:5}).
\begin{theorem}\label{expl2}
The map $Q \colon\mathcal{S}_\delta\to \mathcal{D}_{\lambda,\mu}$
given on $\mathcal{S}^k_\delta$ for $k\not=1$ by Theorem \ref{Expl} and
by
\[Q_1 \colon \mathcal{S}^1_\delta\to \mathcal{D}_{\lambda,\mu}\colon
S\mapsto Q(S)(f)=Q_{\mathrm{Aff}}(S)f + t \,\dive(S) f\]defines a
$\mathfrak{psl}(p+1|p+1)$-equivariant quantization over $\R^{p|p+1}$ for every $t\in\R$. 
\end{theorem}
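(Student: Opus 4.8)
The plan is to transfer everything, as in Section~\ref{cons}, to the space of symbols: writing $R:=Q_{\mathrm{Aff}}^{-1}\circ Q$, the claim is equivalent to saying that $R$ is an even bijection of $\S_\delta$ which maps each $\S^k_\delta$ into $\bigoplus_{l\leqslant k}\S^l_\delta$ with $\S^k_\delta$-component equal to the identity (this is precisely the normalization~(\ref{norma})) and which intertwines the representations $L$ and $\L$ of $\mathfrak{psl}(p+1|p+1)=\g_{-1}\oplus\g_0\oplus\g_1$. Put $\alpha(k)=2k(k-1)$, so that by Theorem~\ref{thmcaspsl} the operator $C$ acts on $\S^k_\delta$ as $\alpha(k)\,\Id$. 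First I would observe that on $\S^k_\delta$ with $k\neq1$ the map $R$ coincides with the map $S\mapsto\hat{S}$ of the proof of Theorem~\ref{flatex} (equivalently, with the operator of Theorem~\ref{Expl}, in which here $p-q+1=0$): this construction only needs $\alpha(k)\neq\alpha(l)$ for $l<k$, and $\alpha(k)=\alpha(l)$ with $l<k$ is equivalent to $(k-l)(k+l-1)=0$, i.e. to $(k,l)=(1,0)$, which is excluded; so $\hat S$ is the unique element of $\bigoplus_{l\leqslant k}\S^l_\delta$ with $\S^k_\delta$-component $S$ lying in $\ker(\cc-\alpha(k)\,\Id)$. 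The case $k=1$ is exactly where the construction degenerates: since $p-q+1=0$, Proposition~\ref{gamma1} shows that $\gamma(h)$ vanishes on $\S^1_\delta$ for all $h\in\g_1$, hence the operator $N=\cc-C$ (which lowers the degree by one, being assembled from the $\gamma(h)$'s as in Proposition~\ref{relcas}, using the basis of the proof of Theorem~\ref{thmcaspsl}) vanishes on $\S^1_\delta$, and also trivially on $\S^0_\delta$; consequently $\cc(S+\varphi)=C(S+\varphi)=0$ for every $\varphi\in\S^0_\delta$, so the $\alpha(1)$-eigenvector completing $S$ is far from unique. The content of the theorem is that the particular completion $\varphi=t\,\dive(S)$, i.e. $R(S)=S+t\,\dive(S)$, still produces an intertwiner. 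In any case $R$ is unitriangular with respect to the grading, hence bijective, so $Q$ is a quantization; and since $R$ sends $\S^k_\delta$ into $\ker(\cc-\alpha(k)\,\Id)$ while $C=\alpha(k)\,\Id$ there, we get $\cc\circ R=R\circ C$.

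Next I would run the Casimir argument of the proof of Theorem~\ref{flatex}, keeping track of the single degenerate degree. Fix $h\in\mathfrak{psl}(p+1|p+1)$ and set $F:=\L_{X^h}\circ R-R\circ L_{X^h}$. Because $\cc$ commutes with $\L_{X^h}$ and $C$ with $L_{X^h}$, the relation $\cc R=RC$ yields $\cc\circ F=F\circ C$; thus $\cc(F(S))=\alpha(k)\,F(S)$ for $S\in\S^k_\delta$. On the other hand $F(S)\in\bigoplus_{l\leqslant k-1}\S^l_\delta$: indeed $L_{X^h}$ preserves the degree (by~(\ref{eqRadouxLeites})), $\gamma(h)$ does not increase it, and $R$ is unitriangular, so the degree-$k$ components of $\L_{X^h}(R(S))$ and of $R(L_{X^h}S)$ both equal $L_{X^h}S$ and cancel. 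Writing $\cc=C+N$ with $C=\alpha(l)\,\Id$ on $\S^l_\delta$ and $N$ of degree $-1$, and comparing top-degree components, a nonzero element of $\ker(\cc-\alpha(k)\,\Id)$ supported in degrees $\leqslant k-1$ must have top degree $m$ with $\alpha(m)=\alpha(k)$, hence $m=1-k$, which forces $k=1$ and $m=0$. Therefore $F$ vanishes identically on $\S^k_\delta$ for every $k\neq1$, and the whole problem reduces to checking that $F$ vanishes on $\S^1_\delta$.

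Finally, for $S\in\S^1_\delta$ I would compute $F(S)$ by hand. Since $\gamma(h)$ vanishes on $\S^1_\delta$ and on $\S^0_\delta$ (as just used), $\L_{X^h}$ agrees with $L_{X^h}$ on both, and $L_{X^h}S$ remains in $\S^1_\delta$; hence $\L_{X^h}(R(S))=L_{X^h}S+t\,L_{X^h}(\dive S)$ while $R(L_{X^h}S)=L_{X^h}S+t\,\dive(L_{X^h}S)$, so that $F(S)=t\big(L_{X^h}(\dive S)-\dive(L_{X^h}S)\big)$. Now every vector field $X^h$ with $h\in\mathfrak{psl}(p+1|p+1)$ is divergence-free --- for $h\in\g_{-1}$ it is constant, for $h\in\g_0=\sla(p|p+1)$ its divergence is, up to sign, the supertrace of a supertraceless matrix, and for $h\in\g_1$ this was recorded at the beginning of this section --- and by Remark~\ref{rem:div} the divergence operator $\dive\colon\S^1_\delta\to\S^0_\delta$ commutes with the Lie derivative along divergence-free vector fields. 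Hence $F(S)=0$, and $Q$ is $\mathfrak{psl}(p+1|p+1)$-equivariant. I expect the crux of the argument, and the only point with no analogue in the non-degenerate case, to be this last step: the construction of Theorem~\ref{flatex} says nothing at all about $\S^1_\delta$, and it is exactly the divergence-freeness of the whole algebra $\mathfrak{psl}(p+1|p+1)$ that makes the order-zero correction $t\,\dive$ compatible with equivariance; everything else is a bookkeeping variant of Theorems~\ref{flatex} and~\ref{Expl}.
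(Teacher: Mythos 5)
Your proposal is correct and follows essentially the same route as the paper: the Casimir eigenvalue $2k(k-1)$ from Theorem \ref{thmcaspsl} handles all degrees $k\neq 1$ exactly as in Theorems \ref{flatex} and \ref{Expl} (the only degenerate pair being $\alpha(1)=\alpha(0)$), while in degree $1$ the vanishing of $\gamma(h)$ on $\S^1_\delta$ plus the fact that all fields of $\mathfrak{psl}(p+1|p+1)$ are divergence-free, so that $L_X\dive S=\dive L_X S$ on $\S^1_\delta$, gives equivariance of $Q_{\mathrm{Aff}}+t\,Q_{\mathrm{Aff}}\circ\dive$ for every $t$. The only difference is that you spell out the bookkeeping (the operator $F$, the top-degree eigenvalue argument, and the vanishing of $N$ on $\S^1_\delta\oplus\S^0_\delta$) that the paper leaves implicit when it says ``we just follow the lines of Theorems \ref{flatex} and \ref{Expl}.''
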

\begin{proof}
For the quantization of symbols of degree not equal to 1, we just follow the lines of Theorems \ref{flatex} and \ref{Expl}. For symbols of degree 1, since $\gamma(h)$ vanishes on $\S_\delta^1$, $Q_{\mathrm{Aff}}$ defines a $\mathfrak{psl}(p+1|p+1)$-equivariant quantization of symbols of degree 1 (recall that $\gamma$ precisely measures the failure of equivariance of $Q_{\mathrm{Aff}})$. The proposed map $Q_1$ defines a quantization for every $t$, since the $\mathfrak{psl}(p+1|p+1)$-modules 
$\S_\delta^1$ and $\S_0^1$ are equivalent and we have therefore, using the basic properties of the divergence, 
\[L_X\dive S=\dive L_X S\] for every $S\in S_\delta^1$ and every divergence-free vector field $X$. 
%
\end{proof}
We now have a result about the $\mathfrak{pgl}(p+1|p+1)$-equivariant quantization.
\begin{theorem}\label{expl3}
The $\mathfrak{psl}(p+1|p+1)$-equivariant quantizations defined in Theorem \ref{expl2} are equivariant with respect to
$\mathfrak{pgl}(p+1|p+1)$. 
\end{theorem}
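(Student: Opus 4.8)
The goal is to upgrade $\mathfrak{psl}(p+1|p+1)$-equivariance to $\mathfrak{pgl}(p+1|p+1)$-equivariance. Since $\mathfrak{pgl}(p+1|p+1)$ is spanned by $\mathfrak{psl}(p+1|p+1)$ together with one extra element, it suffices to exhibit one element $h_0\in\mathfrak{pgl}(p+1|p+1)\setminus\mathfrak{psl}(p+1|p+1)$ and to check that the quantization $Q$ of Theorem~\ref{expl2} satisfies $\mathcal{L}_{X^{h_0}}\circ Q=Q\circ L_{X^{h_0}}$; equivariance with respect to the whole algebra then follows by linearity. The natural candidate is the Euler element $\euler$, which under the grading $j$ of~(\ref{eq:j}) corresponds to $-\Id\in\g_0=\gl(p|p+1)$ and is \emph{not} supertraceless (its supertrace is $p-(p+1)=-1\neq 0$), hence lies outside $\mathfrak{psl}(p+1|p+1)=\g_{-1}\oplus\sla(p|p+1)\oplus\g_1$.

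First I would compute the action of $L_{X^\euler}$ on symbols: since $\euler=X^{-\Id}$ and $-\Id\in\g_0$, Formula~(\ref{eqXA}) applies, giving $L_{X^\euler}(f\otimes v)=X^\euler(f)\otimes v+f\otimes\rho(-\Id)v$ for $v\in S^k_\delta$. On $S^k_\delta=B^\delta\otimes S^k$ the operator $\rho(-\Id)$ acts as the scalar $-(\str(-\Id)\cdot(-\delta)) - k$ on the $S^k$ factor combined with the density part, i.e.\ it is a scalar depending only on $k$ (and on $\delta$ through the density weight); the point is that $L_{X^\euler}$ preserves each $\S^k_\delta$ and acts there as a first-order differential operator whose zeroth-order part is multiplication by a constant. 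Next, on the operator side, $\mathcal{L}_{X^\euler}$ likewise preserves the filtration degree. The key structural fact I would exploit is that $\gamma(\euler)=\L_{X^\euler}-L_{X^\euler}=0$: this is because $\euler\in\g_0$ and Proposition~\ref{gamma0} states that $\gamma$ vanishes on $\g_{-1}\oplus\g_0$. Consequently $\L_{X^\euler}=L_{X^\euler}$ on $\S_\delta$, so that $Q_{\mathrm{Aff}}$ intertwines $L_{X^\euler}$ and $\mathcal{L}_{X^\euler}$ exactly.

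With this in hand the verification splits into the two pieces defining $Q$ in Theorem~\ref{expl2}. For $k\neq 1$: the construction there produces $Q|_{\S^k_\delta}(S)=\hat S$ as the unique eigenvector of the Casimir $\cc$ of $\mathfrak{psl}(p+1|p+1)$ with eigenvalue $\alpha$ whose top term is $S$; but $L_{X^\euler}$ commutes with the $\mathfrak{psl}$-Casimir $C$ (as $\euler$ normalizes $\mathfrak{psl}(p+1|p+1)$, being in the larger algebra $\mathfrak{pgl}$, it commutes with the Casimir of the ideal) and $\L_{X^\euler}=L_{X^\euler}$ commutes with $\cc=C+N$, so the argument at the end of the proof of Theorem~\ref{flatex} applies verbatim with $h=\euler$: both $Q(L_{X^\euler}S)$ and $\L_{X^\euler}(Q(S))$ are $\cc$-eigenvectors of the same eigenvalue with top term $L_{X^\euler}S$, hence equal. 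For $k=1$: here $Q_1(S)(f)=Q_{\mathrm{Aff}}(S)f+t\,\dive(S)f$. The term $Q_{\mathrm{Aff}}(S)$ intertwines $L_{X^\euler}$ and $\mathcal{L}_{X^\euler}$ because $\gamma(\euler)=0$, exactly as for general $\mathfrak{psl}$ elements; for the correction term one must check $L_X\dive S=\dive L_X S$ on $\S^1_\delta$ when $X=X^\euler$. This is \emph{not} automatic because $\euler$ is not divergence-free — $\dive(\euler)=p-(p+1)=-1$ as a vector field on $\R^{p|p+1}$ — so I expect this to be the one place that needs genuine checking rather than quotation.

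\textbf{The main obstacle} is precisely the commutation $[L_{X^\euler},\dive]$ on $\S^1_\delta$. I would handle it by direct computation in coordinates using~(\ref{divergence}): $\dive S=\sum_j(-1)^{\tilde{y^j}}i(\varepsilon^j)\partial_{y^j}S$ lands in $\S^0_\delta$, the density module of weight $\delta$; and $L_{X^\euler}$ on $\S^1_\delta$ versus $L^\delta_{X^\euler}$ on $\S^0_\delta$ differ exactly by the $\rho$-contributions which are constants differing by the "one unit" of symmetric degree that $\dive$ removes — combined with the fact that for the \emph{Euler} field the extra divergence terms $X(\dive(\euler))$ vanish (as $\dive(\euler)$ is the constant $-1$). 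Working this out should show $L_{X^\euler}\dive S-\dive L_{X^\euler}S=0$ on $\S^1_\delta$, so that $Q_1$ is $X^\euler$-equivariant, completing the proof. The underlying conceptual reason, which I would state as the takeaway, is that the $\mathfrak{psl}(p+1|p+1)$-quantization is built entirely from $\mathfrak{psl}$-invariant data ($Q_{\mathrm{Aff}}$, the Casimir $\cc$, and $\dive$ restricted to degree $\leq 1$) all of which are in fact normalized by the Euler grading element, and $\euler$ together with $\mathfrak{psl}(p+1|p+1)$ generates $\mathfrak{pgl}(p+1|p+1)$.
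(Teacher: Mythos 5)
Your proposal is correct, and it reaches the conclusion by a partly different route than the paper. Both arguments begin with the same reduction: $\mathfrak{pgl}(p+1|p+1)$ is generated by $\mathfrak{psl}(p+1|p+1)$ and the Euler field $\euler=X^{-\Id}$, so only equivariance under $\euler$ needs to be checked. The paper then treats all degrees uniformly: since $\euler$ is a linear field, $Q_{\mathrm{Aff}}$ already intertwines $L_{X^\euler}$ and $\L_{X^\euler}$ (equivalently $\gamma(\euler)=0$), so via the explicit formula $Q(S)=\sum_r C_{k,r}Q_{\mathrm{Aff}}(\dive^r S)$ everything reduces to the single identity $L_{X^\euler}\dive S=\dive L_{X^\euler}S$ on every $\S^k_\delta$, which is verified by a direct computation. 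You instead handle $k\neq 1$ abstractly, through the characterization of $Q(S)$ as the unique $\cc$-eigenvector with prescribed top term, and only check the commutation with $\dive$ in degree one. This works, with two remarks. First, the claim that $L_{X^\euler}$ commutes with the Casimir operator of $\mathfrak{psl}(p+1|p+1)$ requires slightly more than ``$\euler$ normalizes the ideal'': you also need that the form $\mathcal{K}$ is $\ad(\euler)$-invariant (true, as it is the restriction of the invariant supertrace form of $\gl(p+1|p+1)$), so that the Casimir element of the ideal is $\ad(\euler)$-invariant in the enveloping algebra. Second, the degree-one computation you postpone actually goes through uniformly in $k$: writing $L_{X^\euler}S=\euler(S)+(\delta(p-q)-k)S$ on $\S^k_\delta$ and using $[\partial_{y^j},\euler]=\partial_{y^j}$, the extra $+1$ produced by commuting $\euler$ past $\partial_{y^j}$ exactly compensates the shift $k\mapsto k-1$ in the scalar part, so $[L_{X^\euler},\dive]=0$ on all of $\S_\delta$ even though $\euler$ is not divergence-free; this single identity is exactly what the paper checks, and with it you could dispense with the Casimir argument for $k\neq 1$ altogether. (A minor quibble: the supertrace of the $\g_0$-component $-\Id$ of $\euler$ equals $+1$ rather than $-1$; either way it is nonzero, which is all your argument uses.)
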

\begin{proof}
 Since the algebra of vector fields $\mathfrak{pgl}(p+1|p+1)$ is generated by $\mathfrak{psl}(p+1|p+1)$ and the Euler field $\euler=\sum_{i=1}^{2p+1}y^i\partial_{y^i}$, it is sufficient to show that the quantization is equivariant with respect to this linear vector field. This reduces to show that we have
\[L_{\euler} \dive S=\dive L_{\euler}S\]
for every $S\in \S^k_\delta$. This can be checked directly.
\end{proof}
\nocite{KacAdvances}
\section{Acknowledgments}
It is a pleasure to thank V. Ovsienko for fruitful discussions, P. Lecomte and J.-P. Schneiders for their interest in our work.
We express our gratitude to the referees of this paper for their valuable comments and suggestions.
P.~Mathonet is supported by the University of Luxembourg internal research project
F1R-MTH-PUL-09MRDO.
F. Radoux thanks the Belgian FNRS for his research fellowship.


\end{document}